\newtheorem{thm}{Theorem}[section]
\newtheorem{lem}[thm]{Lemma}
\makeatletter\@addtoreset{equation}{section}\makeatother
\newcommand{\musc}{\mu_{\text{sc}}}
\newcommand{\muas}{\mu_{\text{arc}}}
\newcommand{\EE}{\mathbb{E}}
\newcommand{\RR}{\mathbb{R}}
\newcommand{\NN}{\mathbb{N}}
\begin{document}

\title{Nonstandard Large and Moderate Deviations for the Laguerre Ensemble}

\author{Helene Götz, Jan Nagel}

\maketitle

\begin{abstract}
In this paper, we show limit theorems for the weighted spectral measure of the Laguerre ensemble under a nonstandard scaling, when the parameter grows faster than the matrix size. For this parameter scaling, the limit behavior is similar to the case of the Gaussian ensemble. We show a large deviation principle, moderate deviations and a CLT for the spectral measure. For the moderate deviations and the CLT, we observe a particular dependence on the rate of the parameter and a corrective shift by a signed measure. The proofs are based on the tridiagonal representation of the Laguerre ensemble.  
\end{abstract}


{\bf Keywords:} random matrices, spectral measure, large deviations, moderate deviations.

{\bf MSC 2020:} 
60B20, 
60F10, 
60F05, 
47B36. 


\section{Introduction}
The Laguerre ensemble, or Wishart ensemble, is one of the classical distributions in random matrix theory. The eigenvalues $\lambda_1,\dots,\lambda_n$ of the Laguerre ensemble of size $n$ have a joint Lebesgue density given by  
\begin{align}
\label{DensityLaguerre}
f_{\beta,\gamma}(\lambda_1,...,\lambda_n) = Z_{\beta,\gamma}^L \prod_{1 \leq i<j \leq n} |\lambda_i-\lambda_j|^{\beta} \prod_{i=1}^n\lambda_i^{\gamma-(n-1)\frac{\beta}{2}-1} e^{-\sum_{i=1}^n \frac{\lambda_i}{2}}\mathbbm{1}_{\{\lambda_i>0\}},
\end{align}
where $\beta>0$ and $\gamma>(n-1)\frac{\beta}{2}$. This distribution was introduced by Wishart \cite{wishart1928} in the study of sample covariance matrices for Gaussian data. When $Z$ is a $n\times p$ matrix with independent standard normal entries, the eigenvalues of the random matrix
\begin{align}
\label{eq:wishartmatrix}
X_n = Z_n Z_n^* 
\end{align}
have the density \eqref{DensityLaguerre} with $\beta=1$ and $\gamma=p/2$, where $n$ corresponds to the sample size and $p$ to the dimension of the data. Since then, it has found numerous applications in multivariate statistics, see \cite{muirhead2009aspects,bai2009random,dieng2011application,johnstone2001distribution} or random matrix theory in general \cite{mehta2004random,anderson2010introduction}. For $\beta=2$ or $\beta=4$, it is also possible to give a full complex or quaternion matrix model with eigenvalue density  \eqref{DensityLaguerre}. For general $\beta>0$, the $\beta$-ensemble with this density is a model of a log-gas at temperature $\beta^{-1}$ \cite{forrester2010log,saff2013logarithmic}.

In this paper, we consider a nonstandard scaling of the Laguerre ensemble, when the parameter $\gamma=\gamma_n$ grows at a rate faster than $n$. In the statistical context, this corresponds to the situation of very high-dimensional data. While most limit theorems deal with the case of $\gamma_n$ growing at most linearly, such nonstandard scaling reveals interesting relations between the Laguerre ensemble and the other two classical distributions of invariant random matrices, the Gaussian and the Jacobi ensemble. \cite{jiang2015approximation} showed that the variation distance between the eigenvalue distribution of the standardized Laguerre and Gaussian ensemble vanishes if $\gamma_n\gg n^3$. This approximation was further refined by \cite{racz2019smooth,chetelat2019middle}. As a consequence, the empirical measure of the standardized Laguerre eigenvalues converges to the semicircle law and not to the usual equilibrium measure of the Laguerre ensemble, the Marchenko-Pastur law. Moreover, \cite{jiang2015approximation} show a large deviation principle (LDP) for the empirical measure, where the rate function is as in the Gaussian case. A similar phenomenon was observed  for the Jacobi ensemble under a nonstandard scaling \cite{dette2009some,jiang2013limit,nagel2014nonstandard} where, depending on the parameters, the eigenvalues behave similar to the Laguerre or the Gaussian case. Closely related are asymptotics for the zeros of Jacobi polynomials, which can exhibit asymptotics similar to those of Laguerre or Hermite polynomials \cite{dette1995some}. 

We consider asymptotics for a rescaled version of the weighted (or projected) spectral measure $\mu_n$ of a matrix $X_n$ of the Laguerre ensemble. The spectral measure can be  characterized by the fact that its $k$-th moment is given by $(X_n^k)_{1,1}$. It is given as
\begin{align*}
\mu_n = \sum_{i=1}^n w_i \delta_{\lambda_i} ,  
\end{align*}
supported by the eigenvalues $\lambda_1,\dots ,\lambda_n$ and with weights related to the eigenvectors of $X_n$ (see \eqref{eq:spectralmeasure}). Under the standard scaling, large deviation principles for the sequence $(\mu_n)_n$ have been proven by \cite{gamboa2011large,magicrules} and moderate deviations in \cite{nagel2013moderatedeviationsspectralmeasures}. The large deviation behavior is very different from the one of the empirical eigenvalue measure considered by \cite{arous1997large}, with both a different speed and rate function. 

Under the nonstandard scaling $\gamma_n\gg n$, we show a large deviation principle for the sequence of spectral measures $\mu_n$ (Theorem \ref{thm:LDPLaguerre}) with a rate function as in the Gaussian case. This also shows the exponentially fast convergence to the semicircle law $\mu_{sc}$. Furthermore, we show a moderate deviation principle (Theorem \ref{thm:MainMDP}) for the moments when we center $\mu_n$ by $\mu_{sc}$. Interestingly, the rate function in the moderate deviation principle depends on the specific growth of $\gamma_n$, and with a minimum not necessarily at the zero measure. Instead, we observe the convergence to the moments of a signed measure, which has a polynomial density with respect to the arcsine distribution. Additionally, we obtain a central limit theorem (Theorem \ref{thm:CltPolynoms}) for polynomial test functions,  with the limit distribution similarly sensitive to the growth of $\gamma_n$. 

The proofs rely on the tridiagonal representation of the Laguerre ensemble by \cite{Dumitriu_2002}. This enables us to start with large deviation principles for independent random variables, which can then be transferred to the spectral measure, as in the Jacobi case in \cite{nagel2014nonstandard}. This method has the advantage that we do not need extra conditions on the sequence $\gamma_n$, in particular no additional lower bound for its growth. 

The following Section \ref{sec:results} contains our main results.
In Section \ref{sec:prelim}, we introduce the tridiagonal model and its spectral measure. The proofs can be found in Section \ref{sec:proofs}.

\section{Results}
\label{sec:results}

For our limit theorems, we rescale the eigenvalues of the Laguerre ensemble with parameter $\gamma_n$, dimension $n$ and inverse temperature $\beta>0$ and set 
\begin{align}
    \label{eq:ScalingLDP}
        \tilde \lambda_i = \frac{1}{\sqrt{2\gamma_nn\beta}} (\lambda_i-2\gamma_n)
\end{align}
for $i=1,\dots ,n$, where $(\lambda_1,\dots ,\lambda_n)$ have the density \eqref{DensityLaguerre}. This is the same rescaling of eigenvalues as in \cite{jiang2015approximation}. For later reference, we set $\beta'=\beta/2$. 
Our main random object is then the weighted spectral measure
\begin{align} \label{eq:spectralmeasure}
\mu_n = \sum_{i=1}^n w_i \delta_{\tilde \lambda_i}
\end{align}
of the rescaled Laguerre ensemble, where the weights $(w_1,\dots ,w_n)$ are independent of the eigenvalues and follow a Dirichlet distribution on the standard simplex with homogeneous parameter $\beta'$, that is, $(w_1,\dots ,w_{n-1})$ have the Lebesgue density
\begin{align} \label{eq:dirdensity}
\frac{\Gamma(\beta' n)}{\Gamma(\beta')^n} \big[ w_1\dots w_{n-1}(1-w_1-\dots -w_{n-1})\big]^{\beta'-1} 
\end{align}
on $(0,1)^{n-1}$. 
For the relation of $\mu_n$ to the random matrix model, see Section \ref{sec:RandTriMod}. In the following, we will always assume that 
\begin{align} 
\lim_{n\to \infty} \frac{\gamma_n}{n} = \infty . 
\end{align}

\subsection{Nonstandard Large Deviations for the Laguerre Ensemble}
Let us start by recalling the definition of a large deviation principle. Let
$\mathbb{X}$ be a topological Hausdorff space with Borel $\sigma$-algebra, $\mathcal{I}: \mathbb{X} \to [0,\infty]$ a lower semicontinuous function and $(a_n)_n$ a sequence of positive real numbers with $a_n \to \infty$.
     A sequence of $\mathbb{X}$-valued random variables $(X_n)_n$ satisfies a \textit{large deviation principle} (LDP) with speed $a_n$ and rate function $\mathcal{I}$ if
\begin{align*}
    \limsup_{n \to \infty} \frac{1}{a_n} \log P(X_n \in C) \leq -\inf_{x \in C} \mathcal{I}(x) \quad \text{for all closed sets } C \subset \mathbb{X}
\end{align*}
and 
\begin{align*}
    \liminf_{n \to \infty} \frac{1}{a_n} \log P(X_n \in O) \geq -\inf_{x \in O} \mathcal{I}(x) \quad \text{for all open sets } O \subset \mathbb{X}.
\end{align*}
A rate function is called \textit{good} if its level sets $\{ x\in \mathbb{X} \mid \mathcal{I}(x) \leq c \}$ are compact for all $c \geq 0$.

We consider $\mu_n$ as a random element of the space $\mathcal{M}_1^c$ of probability measure on $\mathbb R$ with compact support, equipped with the weak topology and the corresponding $\sigma$-algebra. 

In order to formulate the rate function in our LDP, we need to introduce some notation. We denote by $\mu_{sc}$ the semicircle law with density given by 
\begin{align}
    \frac{1}{2\pi}\sqrt{4-x^2} \bbbone_{[-2,2]}(x) ,
\end{align}
and by $\mathcal K(\nu|\mu)$ the Kullback-Leibler distance, or relative entropy, of $\nu$ with respect to $\mu$. For $ |x|\geq 2$, define
\begin{align}
\mathcal F(x) = \int_2^{|x|} \sqrt{y^2-4}\, dy . 
\end{align}
Let $\mathcal S_1([-2,2])$ be the subset of probability measures $\mu$ with support given by $[-2,2]\cup E(\mu)$, where $E(\mu)$ is an at most countable subset of $\mathbb{R}\setminus [-2,2]$. 
We then have the following LDP. 

\begin{thm}
\label{thm:LDPLaguerre}
The sequence of spectral measures $\mu_n$ as in \eqref{eq:spectralmeasure} satisfies a large deviation principle with speed $ n\beta'$ and good rate function $\mathcal I$. For a measure $\mu\in \mathcal S_1([-2,2])$, the rate is given by
    \begin{align} \label{eq:sumrule} 
        \mathcal{I}(\mu) = \mathcal K(\mu_{sc}|\mu) + \sum_{\lambda\in E(\mu)} \mathcal F(\lambda) . 
    \end{align}
    If $\mu \notin \mathcal S_1([-2,2])$, we have $\mathcal I(\mu)=+\infty$. 
\end{thm}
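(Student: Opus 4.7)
The approach is to work with the tridiagonal Dumitriu--Edelman model \cite{Dumitriu_2002} recalled in Section~\ref{sec:prelim}, which realizes $\mu_n$ as the spectral measure with respect to $e_1$ of a tridiagonal Jacobi matrix $\tilde J_n$ whose entries are explicit functions of independent $\chi$-distributed random variables. Writing the diagonal and sub-diagonal entries of the bidiagonal factor as $d_i\sim\chi_{2\gamma_n-(i-1)\beta}$ and $s_i\sim\chi_{(n-i)\beta}$, the rescaled Jacobi parameters of $\tilde J_n$ take the form
\[
\tilde a_i \;=\; \frac{d_i^2 + s_{i-1}^2 - 2\gamma_n}{\sqrt{2\gamma_n n\beta}}\,, \qquad \tilde b_i \;=\; \frac{d_i s_i}{\sqrt{2\gamma_n n\beta}}\,.
\]
Since $\gamma_n/n\to\infty$, one has $2\gamma_n-(i-1)\beta=2\gamma_n(1+o(1))$ uniformly in $i\le n$, and $d_i/\sqrt{2\gamma_n}$ concentrates exponentially fast (at speed $n\beta'$) at $1$; the contribution of $s_{i-1}^2$ to $\tilde a_i$ is also exponentially negligible at this speed. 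Up to exponential equivalence in the sense of LDP, we may therefore replace $\tilde a_i$ by $(d_i^2-2\gamma_n)/\sqrt{2\gamma_n n\beta}$ and $\tilde b_i$ by $s_i/\sqrt{n\beta}$.

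Next, the plan is to establish a joint LDP at speed $n\beta'$ for the sequence $\bigl((\tilde a_i,\tilde b_i)\bigr)_{i\ge 1}$, viewed as a random element of $(\RR\times(0,\infty))^{\NN}$ with the product topology. For each fixed $i$, a direct computation of Laplace transforms shows that $\tilde a_i$ satisfies an LDP at speed $n\beta'$ with rate $a^2/2$ (a moderate deviation of the $\chi^2_{2\gamma_n}$-variable, whose natural speed $\gamma_n\gg n\beta'$ makes the Gaussian tail dominant), while $\tilde b_i$ satisfies an LDP at the same speed with rate $G(b):=b^2-1-\log b^2$, $b>0$. By independence and a Dawson--Gärtner projective-limit argument on finite-dimensional marginals, the joint sequence satisfies an LDP at speed $n\beta'$ with good rate function
\[
\widetilde{\mathcal I}\bigl((a_i,b_i)_{i\ge 1}\bigr) \;=\; \sum_{i\ge 1}\!\left[\tfrac12 a_i^2 + G(b_i)\right].
\]
The map sending a Jacobi coefficient sequence to the associated spectral measure is continuous (on sub-level sets of $\widetilde{\mathcal I}$, where the $b_i$ remain bounded) into $\mathcal M_1^c$ with the weak topology, so the contraction principle yields an LDP for $\mu_n$ at speed $n\beta'$. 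The semicircle sum rule of Killip--Simon then identifies the contracted rate: for $\mu\in\mathcal S_1([-2,2])$ the right-hand side of the sum rule equals $\mathcal K(\mu_{sc}\mid\mu)+\sum_{\lambda\in E(\mu)}\mathcal F(\lambda)$, and $+\infty$ otherwise, producing exactly \eqref{eq:sumrule}.

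The main obstacle is controlling the LDP uniformly over all $n$ Jacobi coordinates, in particular for indices $i$ comparable to $n$, which govern both the termination of the spectrum and the outlier eigenvalues whose contribution to the rate is $\mathcal F$. One also needs exponential tightness of $\mu_n$ in $\mathcal M_1^c$, which reduces to a uniform tail bound on $\max_i|\tilde a_i|$ and $\max_i\tilde b_i$ at the appropriate rate; this can be handled via union-bound arguments together with tail estimates for chi-squared variables. The most delicate point is that the LDP for $\tilde a_i$ at speed $n\beta'$ is a moderate deviation of a $\chi^2_{2\gamma_n}$-variable rather than a classical LDP at the natural speed $\gamma_n$, so the Gaussian rate $a^2/2$ must be verified directly from the Laplace transform of the rescaled variable rather than inherited from a standard Cramér statement. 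The overall methodology, including the projective-limit step and the sum-rule identification, parallels the one developed for the Jacobi ensemble under nonstandard scaling in \cite{nagel2014nonstandard}.
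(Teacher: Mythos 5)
Your proposal is correct and follows essentially the same route as the paper: the Dumitriu--Edelman tridiagonal model, entrywise LDPs at speed $n\beta'$ obtained from the Laplace transforms of the $\chi^2$ variables (with the Gaussian rate $\tfrac12 a^2$ for the diagonal and $b^2-1-\log b^2$ for the off-diagonal, exactly matching the paper's Lemma~\ref{LDPLemma}), a Dawson--G\"artner projective-limit step, contraction through the Szeg\H{o} map, and identification of the rate via the Killip--Simon sum rule. The only cosmetic difference is that you decouple the cross terms by exponential equivalence, whereas the paper keeps the auxiliary coordinates $z_{2k-1}/(2\gamma_n)\to 1$ and $z_{2k}/\sqrt{2\gamma_n n\beta}\to 0$ inside the joint LDP and eliminates them by a contraction map, and it handles the continuity of the coefficient-to-measure map by restricting the LDP to the set of bounded coefficient sequences rather than by exponential tightness in $\mathcal M_1^c$.
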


Let us remark that $\mathcal I(\mu)$ vanishes if and only if $\mu$ is equal to the semicircle law $\mu_{sc}$. As a consequence of Theorem \ref{thm:LDPLaguerre}, we therefore obtain the exponentially fast convergence of $\mu_n$ to $\mu_{sc}$. 

The rate function $\mathcal I$ in Theorem \ref{thm:LDPLaguerre} coincides with the rate function for the spectral measure of the Gaussian ensemble as obtained in \cite{gamboa2011large}. The specific form of the rate is identical to one side in the famous sum rule of Killip and Simon \cite{killip2003sum}, which allows to rewrite \eqref{eq:sumrule} as a function of the recursion coefficients, or Jacobi coefficients, of $\mu$. A large deviation principle for the spectral measure of the Gaussian ensemble with rate function \eqref{eq:sumrule} was proven in \cite{magicrules}, without relying on the Killip-Simon sum rule.  

The LDP obtained by \cite{jiang2015approximation} for the empirical measure $\hat \mu_n$ under the same scaling and under the condition $\gamma_n \gg n^2$ is quite different: the speed therein is equal to $n^2$ and the rate function agrees with the one of \cite{arous1997large} for the Gaussian ensemble.

\subsection{Nonstandard Moderate Deviations for the Laguerre Ensemble}

We now consider the difference 
\begin{align}\label{eq:signeddifference}
\nu_n = \sqrt{n \beta'/b_n}(\mu_n-\musc)
\end{align}
when the spectral measure $\mu_n$ as in \eqref{eq:spectralmeasure} is centered by its weak limit, the semicircle law $\mu_{sc}$ and $1\ll b_n \ll n$. A large deviation principle for random variables centered as in \eqref{eq:signeddifference} and with a scaling between that of a LLN and a CLT is also called a \textit{moderate deviation principle} (MDP). 
Our moderate deviation principle will not hold for the measures themselves, but for their moments.

Let us denote by $\mathcal M_0^f$ the set of all signed measures with finite variation, a total mass of zero and finite moments of all order. We write
\begin{align*}
m_k(\nu) = \int x^k \, d\nu(x)
\end{align*}
for the $k$-th moment of $\nu\in \mathcal M_0^f$ and $m(\nu)=(m_k(\nu))_{k\geq 1}$ for its moment sequence. Then $m(\nu)$ is an element of the sequence space $\mathbb R^{\mathbb N}$, which we consider with the product topology. As shown by \cite{boas1939stieltjes}, the moment problem for signed measures has a surprisingly trivial solution: any sequence $m\in \mathbb R^{\mathbb N}$ is the moment sequence of some, and then of infinitely many $\nu \in \mathcal M_0^f$. There is however at most one representing measure with compact support, which follows from the consideration of \cite{hausdorff1923momentprobleme} for the Hausdorff moment problem for signed measures.  

A particular role in the MDP is played by the signed measure $\nu_\xi\in \mathcal M_0^f$ for $\xi\geq 0$, which is defined by its (signed) Lebesgue density
\begin{align}
\label{eq:densityStrangemoments}
    \frac{\xi}{2\pi} \frac{x(x^2-3)}{\sqrt{4-x^2}} \bbbone_{[-2,2]}(x).
\end{align}
The moments of $\nu_\xi$ can be calculated with help of the moments of the arcsine distribution $\muas$ on $[-2,2]$, which are given by $m_{2k-1}(\muas)=0$ and $m_{2k} = \binom{2k}{k}$ for all $k\geq 1$ \cite{Kemperman1982}. We have
\begin{align*}
m_k(\nu_\xi) = \frac{\xi}{2}\big( m_{k+3}(\muas) - 3 m_{k+1}(\muas)\big) , 
\end{align*} 
and a short computation shows then 
\begin{align} \label{eq:strangemoments}
    m_k(\nu_\xi) = \begin{cases}
        \xi\binom{k}{\frac{k-3}{2}} \ & k\text{ odd and } k \geq 3, \\
        \;0 & k \text{ even or } k=1 .
    \end{cases} 
\end{align}

In order to formulate the following rate function more easily, we write $\mu_m$ for a measure in $\mathcal M_0^f$ with moment sequence $m$, the specific choice of $\mu_m$ will not be relevant.

\begin{thm}
\label{thm:MainMDP}
 Suppose 
$(b_n)_{n\geq 0}$ is a sequence with $b_n\to \infty$, $b_n/n\to 0$ and 
\begin{align*}
\frac{n\beta'}{\sqrt{b_n\gamma_n}} \xrightarrow[n \to \infty]{} \xi \in [0,\infty).
\end{align*}
For $\nu_n$ is as in \eqref{eq:signeddifference}, the sequence of moment sequences $m(\nu_n)$
satisfies a large deviation principle with speed $b_n$ and good rate function    
\begin{align*}
\mathcal I(m) =  \frac{1}{2}\sum_{k=0}^\infty \left( \int p_k(x) \, d(\mu_m-\nu_\xi)(x) \right)^2 , 
\end{align*}
where $p_k, k\geq 0$ are the normalized polynomials orthogonal with respect to $\musc$. 
    \end{thm}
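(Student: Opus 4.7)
The strategy is to transfer the MDP from the Jacobi parameters of the random spectral measure $\mu_n$ to its moment sequence, relying on the tridiagonal representation (to be introduced in Section \ref{sec:prelim}) that realizes the rescaled Jacobi parameters of $\mu_n$ as explicit functions of independent chi-distributed random variables. The analogous scheme has proven effective for the Jacobi ensemble under a nonstandard scaling in \cite{nagel2014nonstandard}, and it avoids any extra lower-bound assumption on $\gamma_n$.

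First, expand the rescaled Jacobi parameters $(\tilde a_k,\tilde b_k)_{k\geq 1}$ of $\mu_n$ as $(0,1)$ plus a deterministic shift of order $\sqrt{\beta'/\gamma_n}$, arising from the centering by $2\gamma_n$, plus independent random fluctuations of order $1/\sqrt{n\beta'}$ originating from the chi entries of the Dumitriu--Edelman bidiagonal matrix. Multiplying by $\sqrt{n\beta'/b_n}$ turns the deterministic shift into a sequence converging to $\xi$ times an explicit vector $(c_k)_{k\geq 1}$, while the rescaled random part has the correct scale $b_n^{-1/2}$ for an MDP at speed $b_n$. Each chi-squared variable satisfies a standard Gaussian MDP; independence together with Dawson--G\"artner then yields a joint MDP on $\RR^\NN$ for the rescaled Jacobi parameters, with good Gaussian rate function centered at $\xi(c_k)_{k\geq 1}$.

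Next, the moment sequence $m(\nu_n)$ is a continuous function of the rescaled Jacobi parameters in the product topology on $\RR^\NN$. The contraction principle then yields the MDP for $m(\nu_n)$, and the resulting rate function equals the infimum of the Gaussian quadratic rate from the previous step over all Jacobi-parameter perturbations producing a prescribed moment sequence for $\mu_m-\musc$. Since the polynomials $p_k$ are orthonormal with respect to $\musc$, a first-order expansion sends Jacobi-parameter deviations bijectively to the coefficients $\int p_k\,d(\mu_m-\nu_\xi)$, so this infimum collapses to the Parseval-type sum displayed in the theorem. The deterministic shift $\xi(c_k)_{k\geq 1}$ produces exactly the moment sequence of $\nu_\xi$, which identifies the recentering and explains the moments in \eqref{eq:strangemoments}.

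The principal obstacle is twofold. First, pinning down the deterministic shift of the Jacobi parameters precisely enough to recover the density \eqref{eq:densityStrangemoments} requires going beyond the leading-order Taylor expansion of $\sqrt{\chi_m^2/m}$ and tracking the second-order correction of order $1/m$, since it is this correction that, after multiplication by $\sqrt{n\beta'/b_n}$, supplies the $\xi$-shift; getting the coefficients right is what produces the specific polynomial factor $x(x^2-3)$. Second, exponential tightness of $m(\nu_n)$ in the product topology and uniform control in the moment index must be established; here the LDP of Theorem \ref{thm:LDPLaguerre} localizes the support of $\mu_n$ in a fixed compact subset of $\RR$ up to superexponentially small probability, which reduces matters to the compactly-supported case where polynomial continuity and term-by-term Dawson--G\"artner arguments apply cleanly.
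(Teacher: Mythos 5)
Your overall architecture matches the paper's: tridiagonal (Dumitriu--Edelman) representation, an MDP for the rescaled Jacobi entries built from the independent $\chi^2$ blocks, transfer to finite moment vectors via a linearization at the free Jacobi coefficients, identification of the rate through the orthonormal polynomials of $\musc$, and Dawson--G\"artner to pass to the full moment sequence. However, there is a genuine gap in the step you yourself single out as the principal obstacle: you misidentify the mechanism that produces the shift $\nu_\xi$. In the Laguerre tridiagonal model the diagonal entries are $d_k=z_{2k-1}+z_{2k-2}$ with $z_{2k-1}\sim\chi^2_{2\gamma_n-\beta'(2k-2)}$ and $z_{2k-2}\sim\chi^2_{\beta'(2n-2k+2)}$; after the rescaling \eqref{eq:ScalingLDP} the centering $2\gamma_n$ removes the mean of $z_{2k-1}$ but \emph{not} that of $z_{2k-2}$, which contributes a deterministic term of order $\mathbb{E}[z_{2k-2}]/\sqrt{2\gamma_n n\beta}\asymp\sqrt{n\beta'/\gamma_n}$ to $\tilde d_k$ for $k\geq 2$. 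Multiplied by $\sqrt{n\beta'/b_n}$ this is exactly $n\beta'/\sqrt{b_n\gamma_n}\to\xi$, i.e.\ the shift is a \emph{leading-order mean effect} of the small-parameter chi-squared variables, not a second-order Taylor correction of $\sqrt{\chi^2_m/m}$ (there are no such square roots in the Laguerre entries; that expansion belongs to the Gaussian model). Your stated order $\sqrt{\beta'/\gamma_n}$ for the deterministic shift is too small by a factor $\sqrt{n}$ and would make the shift vanish in the limit, so as written your argument cannot produce $\nu_\xi$ at all. The paper encodes this shift as the vector $w=(0,0,\xi,0,\xi,\dots)$ in the Jacobi-coefficient deviations and obtains $m(\nu_\xi)=Dw$ by a telescoping binomial identity, where $D$ is the Jacobian of the coefficient-to-moment map at the free Jacobi matrix; its correctness is confirmed by the alternative centering of Section \ref{sec:AltScaling}, where additionally subtracting $n\beta$ changes $w$ to $(-\xi,0,0,\dots)$ and $\nu_\xi$ to $\hat\nu_\xi$.

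Two smaller points. The transfer from Jacobi coefficients to moments is not a plain contraction-principle step: the map is nonlinear and the random input is a rescaled deviation, so you need the Delta method for MDPs (Gao--Zhao), with the explicit Jacobian $D_{2K-1}$ from \cite{dette2012distributions} and the identity $D_{2K-1}D_{2K-1}^\top=\mathrm{Cov}(X_i,X_j)$ to recognize $D^{-1}m$ as the vector of integrals $\int p_k\,d\mu_m$; your ``first-order expansion'' gestures at this but the infimum does not simply ``collapse'' without these ingredients. Finally, no exponential tightness or support localization via Theorem \ref{thm:LDPLaguerre} is needed: the paper proves the finite-dimensional MDPs by G\"artner--Ellis plus the Delta method and only then applies Dawson--G\"artner at the level of moment sequences in the product topology, which handles the projective limit directly; doing Dawson--G\"artner first on the Jacobi parameters, as you propose, would instead force you to justify an infinite-dimensional Delta method.
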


The rate function in Theorem \ref{thm:MainMDP} is zero if and only if $m=m(\nu_\xi)$ which shows that, under our nonstandard scaling, the moments of $\nu_n$ do not converge to 0 but to the moments of $\nu_\xi$. Note that the sequence of signed measures $\nu_n$ does not converge weakly, since the total variation of $\nu_n$ is equal to $2\sqrt{n\beta'/b_n}\to \infty$ and weak convergence of signed measures implies bounded total variation  \cite[Proposition 1.4.4]{bogachev2018weak}. A similar signed measure appears in the limits of empirical spectral measures in \cite{enriquez2016spectra} and \cite{noiry2018spectral}, we also refer to \cite{heiny2022limiting} for a large class of perturbative signed measures. 

Let us remark that, if $m$ is the moment sequence of a measure $\mu_m$, which is absolutely continuous with respect to the semicircle law with a signed density $\frac{\partial \mu_m}{\partial \musc}$, then Parseval's identity implies 
\begin{align} \label{eq:mdprate}
\mathcal I(m) = \frac{1}{2} \sum_{k=0}^\infty \left( \int p_k(x) \frac{\partial(\mu_m-\nu_{\xi})}{\partial \musc}(x)\, d\musc(x) \right)^2 = \frac{1}{2} \int \left( \frac{\partial \mu_m}{\partial \musc}-\frac{\partial\nu_{\xi}}{\partial \musc} \right)^2 \, d\musc .
\end{align}
In this case, the density is uniquely determined by the moment sequence $m$ since the measure $\mu_m$ is compactly supported and then uniquely determined by $m$. 

It is a characteristic feature of a moderate deviation principle that it holds for a range of rescalings with a common rate function. The rate function in Theorem \ref{thm:MainMDP} does depend on the sequence $(b_n)_n$. We still call the statement a moderate deviation priciple since we allow different orders of rescaling and the possible rate functions belong to a common class depending on $\xi$. Under the additional assumption $n^2/\gamma_n \ll b_n \ll n$, we have $\xi=0$ and then the rate function simplifies to 
\begin{align*}
\mathcal I(m) = \frac{1}{2} \sum_{k=0}^\infty \left( \int p_k(x) \, d\mu_m(x) \right)^2 .
\end{align*}
Additionally, if $m$ is the moment sequence of a measure $\mu_m$, which is absolutely continuous with respect to $\musc$, we have as in \eqref{eq:mdprate} that $\mathcal I (m)$ is given by the squared norm of $\frac{\partial \mu_m}{\partial \musc}$ in $L^2(\musc)$. In this case, the rate function agrees with the classical one in the MDP for empirical measures in \cite{acosta1994projective}. 

\subsection{Nonstandard Central Limit Theorem for the Laguerre Ensemble}

Our final result is a central limit theorem (CLT)  for the random moments of $\mu_n$ which is formulated as a CLT for $\mu_n$ with polynomial test functions. In the limit appears again a contribution of the signed measure $\nu_{\xi}$ with moments as in \eqref{eq:strangemoments}. 

\begin{thm}
    \label{thm:CltPolynoms}
 Suppose the parameter of the Laguerre ensemble satisfies
    \begin{align*}
 \frac{n\beta'}{\sqrt{\gamma_n}} \xrightarrow[n \to \infty]{} \zeta \in [0, \infty).
    \end{align*}
    Let $\mu_n$ be the spectral measure of the rescaled eigenvalues as in \eqref{eq:spectralmeasure}, 
    then for all polynomials $p$ we have the convergence in distribution
     \begin{align*}
        \sqrt{n\beta'}\left( \int p \, d\mu_n- \int p \, d \musc\right) \xrightarrow[n\to \infty]{d} \mathcal{N}\left(\mu(p)
         ,  \sigma^2(p)
        \right) 
    \end{align*} 
     where     
     \begin{align*}
     \mu(p) = \int p\, d\nu_{\zeta},\qquad   \sigma^2(p) = \int \left( p-\int p \, d\musc\right)^2\,d\musc .
    \end{align*}
\end{thm}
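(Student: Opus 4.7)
The plan is to follow the tridiagonal method used in this paper for the LDP and MDP. By the Dumitriu--Edelman construction, $\mu_n$ is realized as the spectral measure at $e_1$ of a rescaled tridiagonal Jacobi matrix $\tilde J_n$ whose entries are independent rescaled chi variables: the diagonal entry $(\tilde J_n)_{kk}$ is built from $x_k^2 + y_{k-1}^2$ with independent $x_k \sim \chi_{2\gamma_n - (k-1)\beta}$, $y_k \sim \chi_{(n-k)\beta}$, $y_0 = 0$, centered by $2\gamma_n$ and rescaled by $\sqrt{2\gamma_n n\beta}$; the off-diagonal entry $(\tilde J_n)_{k,k+1}$ is the product $x_k y_k$ divided by the same normalization. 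For a polynomial $p$ of degree $d$, $\int p\,d\mu_n = (p(\tilde J_n))_{11}$ depends only on the top-left $(d+1)\times(d+1)$ block of $\tilde J_n$, so the CLT reduces to one for a polynomial of finitely many independent chi variables.

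\textbf{Joint CLT and bias of the entries.} Set $\Delta_n = \tilde J_n - J_{sc}$, where $J_{sc}$ is the semicircle Jacobi matrix (zero diagonal, unit off-diagonals), so that $(p(J_{sc}))_{11} = \int p\,d\musc$. A direct chi-squared computation yields $\EE[(\tilde J_n)_{11}] = 0$ (by the boundary $y_0 = 0$); $\sqrt{n\beta'}\,\EE[(\tilde J_n)_{kk}] \to \zeta$ for $k \geq 2$, arising from the deterministic term $(n-2k+2)\beta/\sqrt{2\gamma_n n\beta}$; and $\sqrt{n\beta'}\bigl(\EE[(\tilde J_n)_{k,k+1}] - 1\bigr) \to 0$, using $\gamma_n/n \to \infty$ together with the Stirling expansion of the mean of $\chi$. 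Combined with the classical CLT for $\chi^2$ variables and independence across disjoint chi variables, this gives joint convergence of $\sqrt{n\beta'}(\Delta_n - \EE\Delta_n)$, restricted to the relevant block, to a centered Gaussian matrix $G$ with explicit covariance.

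\textbf{Delta method and identification of the limit.} For $p(x) = x^d$, Taylor expand
\begin{equation*}
(p(\tilde J_n))_{11} - (p(J_{sc}))_{11} = L(\Delta_n) + R_n, \qquad L(A) := \sum_{j=1}^d \bigl(J_{sc}^{j-1} A\, J_{sc}^{d-j}\bigr)_{11},
\end{equation*}
where $R_n$ is at least quadratic in the entries of $\Delta_n$, so $\sqrt{n\beta'}\,\EE|R_n| = O((n\beta')^{-1/2}) \to 0$. By Slutsky, $\sqrt{n\beta'}(\int p\,d\mu_n - \int p\,d\musc)$ splits into a deterministic bias $\sqrt{n\beta'}\,L(\EE\Delta_n)$ tending to $\mu(p)$ and a Gaussian fluctuation $L(G)$ of variance $\sigma^2(p)$. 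Only the diagonal entries contribute to $\tilde D := \lim \sqrt{n\beta'}\,\EE\Delta_n$, which on the relevant block equals $\zeta(I - e_1 e_1^T)$; hence
\begin{equation*}
\mu(x^d) = \zeta \Bigl( d\, m_{d-1}(\musc) - \sum_{j=1}^d m_{j-1}(\musc)\,m_{d-j}(\musc) \Bigr) .
\end{equation*}
For $d$ even this vanishes, and for $d = 2m+1$ the Catalan recurrence $C_{m+1} = \sum_{i=0}^m C_i C_{m-i}$ gives $\zeta(d C_m - C_{m+1}) = \zeta \binom{d}{m-1}$, matching \eqref{eq:strangemoments}; thus $\mu(p) = \int p\,d\nu_\zeta$.

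\textbf{Main obstacle.} I expect two delicate points. The first is the variance identification: $L(G)$ is a linear combination of Gaussian entries of $G$, and checking that its variance is exactly $\sigma^2(p)$ reduces to a combinatorial evaluation analogous to the spectral-measure CLT for the Gaussian $\beta$-ensemble (cf.\ \cite{magicrules}), while tracking carefully the covariances stemming from the shared $\chi$-variable $x_k$ appearing in both $(\tilde J_n)_{kk}$ and $(\tilde J_n)_{k,k+1}$. The second is the boundary effect at the top-left corner, which is responsible for subtracting $\zeta e_1 e_1^T$ from the otherwise uniform diagonal bias $\zeta I$ and thereby for the specific arcsine-weighted density \eqref{eq:densityStrangemoments} of $\nu_\zeta$; without this subtraction the bias would not correspond to integration against any finite signed measure supported on $[-2,2]$.
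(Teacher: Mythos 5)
Your overall architecture is the same as the paper's: tridiagonal representation, an entrywise CLT for the rescaled $\chi^2$ variables, and a delta-method linearization around the free Jacobi matrix $\mathcal J_0$. The difference is organizational: the paper factors the linearization through the moment map $\varphi_K:(d_1,c_1^2,\dots)\mapsto m_{[2K-1]}$, whose Jacobian at the semicircle coefficients is the explicit matrix $D_{2K-1}$ of \eqref{eq:Dk}, and then identifies mean and variance algebraically ($D_{2K-1}w$ telescopes to $m(\nu_\zeta)$, and $(D_{2K-1}D_{2K-1}^\top)_{ij}=\mathrm{Cov}_{\musc}(x^i,x^j)$ by the Chang--Kemperman--Studden argument). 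You instead linearize $(p(\tilde J_n))_{11}$ directly via $L(A)=\sum_j(\mathcal J_0^{\,j-1}A\,\mathcal J_0^{\,d-j})_{11}$; your $L$ is exactly the composition of the paper's two Jacobians. Your bias computation is complete and correct: the entrywise limits ($0$ for the $(1,1)$ entry, $\zeta$ for the other diagonal entries, $0$ for the off-diagonal entries, the last using $\gamma_n\gg n$), the resulting formula $\mu(x^d)=\zeta(d\,m_{d-1}(\musc)-\sum_j m_{j-1}(\musc)m_{d-j}(\musc))$, and the Catalan identity $\zeta(dC_m-C_{m+1})=\zeta\binom{d}{m-1}$ all check out and match \eqref{eq:strangemoments}. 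One simplification you should note: the covariances you flag from the shared variable $z_{2k-1}$ appearing in both $\tilde d_k$ and $\tilde c_k$ vanish in the limit, because $\mathrm{Var}(\sqrt{n\beta'}\,z_{2k-1}/2\gamma_n)\sim n\beta'/\gamma_n\to 0$; this is exactly the degeneracy of the second and third coordinates in the paper's Lemma \ref{weakconvmom}, and it makes the limiting Gaussian matrix $G$ have independent entries ($N(0,1)$ on the diagonal, $N(0,1/4)$ off it).

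The one genuine gap is the variance identification: you assert that $\mathrm{Var}(L(G))=\sigma^2(p)$ "reduces to a combinatorial evaluation" but do not carry it out, and this is not a routine step — it is the nontrivial identity that makes the theorem's variance formula hold. The clean way to close it, as in the paper, is to avoid the entrywise combinatorics altogether: the covariance matrix of the limiting moment vector is $D_{2K-1}D_{2K-1}^\top$, the rows of $D_{2K-1}^{-1}$ encode the orthonormal polynomials $p_1,\dots,p_{2K-1}$ for $\musc$ (because $D_{2K-1}^{-1}X$ with $X_i=x^i-m_i(\musc)$ has identity covariance), and hence $\alpha^\top D_{2K-1}D_{2K-1}^\top\alpha=\mathrm{Var}_{\musc}(p)=\int(p-\int p\,d\musc)^2d\musc$. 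If you insist on your direct route, you must prove the equivalent identity
\begin{align*}
\sum_{k}\Big(\sum_{j=1}^d(\mathcal J_0^{\,j-1})_{1k}(\mathcal J_0^{\,d-j})_{k1}\Big)^2+\frac{1}{4}\sum_{k}\Big(\sum_{j=1}^d\big[(\mathcal J_0^{\,j-1})_{1k}(\mathcal J_0^{\,d-j})_{k+1,1}+(\mathcal J_0^{\,j-1})_{1,k+1}(\mathcal J_0^{\,d-j})_{k1}\big]\cdot 2\Big)^2
\end{align*}
equals $m_{2d}(\musc)-m_d(\musc)^2$, which is precisely the content of the $DD^\top$ identity you would be re-deriving by hand. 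With that step supplied, your proof is complete and equivalent to the paper's.
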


The assumption on the sequence $\gamma_n$ in the CLT is stronger than the one in the MDP. If the limit $\zeta\in [0,\infty)$ exists, we have necessarily $\xi=0$. Conversely, if the MDP holds with $\xi>0$, the CLT will not hold.

\subsection{On an alternative scaling of the eigenvalues} 
\label{sec:AltScaling}

The appearance of the measure $\nu_\xi$ in the MDP and the CLT is a feature unique to the nonstandard scaling $\gamma_n \gg n$. A measure such as $\nu_\xi$ is not present in the limit theorems for the Laguerre ensemble in the regime $\gamma_n \sim cn$ or for the Gaussian ensemble. It is tempting to search for a finer rescaling of eigenvalues, such that the measure $\nu_\xi$ does not appear. Looking at the tridiagonal matrix model $\mathcal{L}_n(\beta,\gamma)$ for the Laguerre ensemble (defined in  \eqref{MatrixEntriesLaguerre}), we 
 need a sufficiently fast convergence of the rescaled eigenvalues of $\mathcal{L}_n(\beta,\gamma)$ to the semicircle law. The diagonal entries of $\mathcal{L}_n(\beta,\gamma)$ have expectation $n\beta$ (for the first entry) or expectation $2\gamma_n +n\beta$ (for all other diagonal entries, up to a constant). Therefore, we may consider the rescaling 
    \begin{align*}
      \hat \lambda_i=   \frac{1}{\sqrt{2\gamma_n n \beta}}(\lambda_i -2\gamma_n-n\beta)
    \end{align*}
instead of \eqref{eq:ScalingLDP}. 

For the spectral measure with these support points, the results of the LDP, the MDP for $\xi = 0$ and the CLT for $\zeta=0$ remain the same since $n\beta$ grows too slowly to have an impact in those cases. 
However, in the MDP with $\xi >0$ and in the CLT with $\zeta>0$ appears now another measure: the signed measure $\nu_\xi$ has to be replaced by the measure $\hat \nu_\xi$ which has 
 the signed Lebesgue density
\begin{align}
\label{eq:strangemomentsdensity2}
    -\frac{\xi}{2\pi} \,x\sqrt{4-x^2} \bbbone_{[-2,2]}(x).
\end{align}
The moments of $\hat \nu_\xi$ are given by 
\begin{align} \label{eq:strangemoments2}
    m_k(\hat \nu_\xi) = \begin{cases}
        \xi \left[ \binom{k}{\frac{k-3}{2}}-\binom{k}{\frac{k-1}{2}}\right] & k\text{ odd and } k \geq 3 , \\
        \;0 & k \text{ even or } k=1 .  
    \end{cases} 
\end{align}

We comment on the necessary modifications to the proof of the MDP in Section \ref{sec:AltScalingProof}. The changes to the proof of the CLT are similar and we therefore omit commenting on them.

\section{Tridiagonal representations}
\label{sec:prelim}

\subsection{Jacobi matrices and spectral measures}

For a sequence $c_k>0,d_k\in \mathbb{R}, k\geq 1$ with $\sup_k\{c_k+|d_k|\}<\infty$, we define the infinite Jacobi matrix 
\begin{align}\label{eq:tridigonalinfinite}
\mathcal{J}  = \begin{pmatrix}
    d_1 & c_1 &  \\
    c_1 & d_2 & c_2 \\
    & c_2 & \ddots & \ddots\\
    & & \ddots &  
\end{pmatrix} .
\end{align}
Then $\mathcal J$ is a self-adjoint bounded operator on $\ell^2$ with cyclic vector the first unit vector $e_1$. By the spectral theorem, there exists a unique spectral measure $\mu$ of the pair $(\mathcal J,e_1)$, which may be defined by the moment relation
\begin{align}\label{eq:spectralmeasuremoments}
  \int x^k \, d\mu(x) = \langle e_1, \mathcal J^k e_1 \rangle 
\end{align}
for $k\geq 0$. The support of $\mu$ is given by the spectrum of $\mathcal J$ and is therefore infinite, but compact. 
 
On the other hand, suppose $\mu$ is a probability measure on $\mathbb R$ with compact but infinite support. Applying the Gram-Schmidt procedure
to the sequence $1, x, x^2, \dots$ in $L^2(\mu)$, we obtain a sequence $p_0,p_1,\dots $ of orthonormal polynomials with positive leading coefficients. 
They obey the recursion relation
\begin{align} \label{eq:polrecursion}
xp_k(x) = c_{k+1} p_{k+1}(x) + d_{k+1} p_k (x) + c_{k} p_{k-1}(x)
\end{align}
for $ k \geq 0$, where the recursion coefficients satisfy $d_k \in \mathbb R, c_k > 0$ for all $k\geq 1$ and with $p_{-1}(x)=0$.  
As a consequence of this recursion, the linear transformation $f(x) \rightarrow xf(x)$ is represented in the basis $\{p_0, p_1, \dots\}$  by the matrix $\mathcal J$ as in \eqref{eq:tridigonalinfinite}. 
The mapping $\mu \mapsto \mathcal J$ is a one to one correspondence between probability measures on $\mathbb R$ having compact infinite support and Jacobi matrices $\mathcal J$, which is also called the Szeg\H{o} mapping.

An important role is played by the so-called free Jacobi matrix $\mathcal J_0$, with coefficients
\begin{align}
\label{eq:freeJacobi}
d_k = 0, \qquad c_k=1
\end{align}
for all $k\geq 1$, whose spectral measure is the semicircle law $\mu_{sc}$. 

Starting from a finite $n\times n$ tridiagonal matrix
\begin{align}\label{eq:tridiagonalfinite}
\mathcal{J}_n =\begin{pmatrix}
    d_1 & c_1 &  \\
    c_1 & d_2 & \ddots \\
    & \ddots & \ddots & c_{n-1}\\
    & & c_{n-1} & d_n
\end{pmatrix} 
\end{align}
with positive subdiagonal, we can define the spectral measure $\mu_n$ of $\mathcal J_n$ by the analogous moment relation as in \eqref{eq:spectralmeasuremoments}, that is, 
\begin{align}\label{eq:spectralmeasuremomentsfinite}
  \int x^k \, d\mu_n(x) = \langle e_1, \mathcal J_n^k e_1 \rangle . 
\end{align}
A straightforward calculation reveals that it is the finitely supported measure 
\begin{align}
\label{eq:spectralmeasure2}
\mu_n = \sum_{i=1}^n w_i \delta_{\lambda_i} , 
\end{align}
with $\lambda_1,\dots , \lambda_n$ the eigenvalues of $\mathcal J_n$ and with weights $w_i = \langle u_i,e_1\rangle^2$, for $u_1,\dots , u_n$ a corresponding orthonormal basis of eigenvectors. Compared with the empirical eigenvalue measure $\hat\mu_n$, it is a weighted version with the weights containing information on the eigenvalues. 

As in the infinite dimensional case, we may start with a finitely supported measure as in \eqref{eq:spectralmeasure2} and define the first $n-1$ polynomials orthonormal with respect to $\mu_n$. They satisfy then the recursion \eqref{eq:polrecursion} with coefficients given as entries in the matrix \eqref{eq:tridiagonalfinite}.

\subsection{Random spectral measures}
\label{sec:RandTriMod}


The tridiagonal representation of the Laguerre ensemble was constructed by \cite{Dumitriu_2002}, extending the work of \cite{trotter1984}. It gives a random matrix model with eigenvalue density given by \eqref{DensityLaguerre} for general parameter $\beta>0$. We denote by $\chi^2_r$ the $\chi^2$-distribution with $r >0$ degrees of freedom which has the probability density function
\begin{align*}
    \frac{1}{2^{r/2}\Gamma(r/2)} x^{r/2-1}e^{-x/2} \bbbone_{[0,\infty)}(x) ,
\end{align*}
where $\Gamma$ denotes the gamma function.
Let $n\in \mathbb N$, $\beta' = \beta/2 >0$ and $\gamma>(n-1)\beta'$ and suppose 
$z_1,...,z_{2n-1}$ are independent random variables with 
\begin{align*}
    z_k \sim \begin{cases}
    \chi^2_{2\gamma-\beta'(k-1)} & \text{for $k$ odd,}\\
    \chi^2_{\beta'(2n-k)} & \text{for $k$ even}.
    \end{cases}    
\end{align*}
Let $\mathcal{L}_n(\beta,\gamma)$ denote the random tridiagonal matrix as in \eqref{eq:tridiagonalfinite} with entries given by 
\begin{equation}
\label{MatrixEntriesLaguerre}
    \begin{aligned}
    d_k &= z_{2k-1} + z_{2k-2} \\
    c_k^2 &= z_{2k-1} z_{2k},
    \end{aligned}
\end{equation}
with $z_0 = 0$. Then the distribution of the spectral measure $\mu_n$ of $\mathcal{L}_n(\beta,\gamma)$ is characterized by the following: the eigenvalues have the Lebesgue density \eqref{DensityLaguerre}, the eigenvalues and weights are independent and the weights are Dirichlet distributed on the standard simplex with homogeneous parameter $\beta'$ defined by \eqref{eq:dirdensity}. 

The tridiagonal representation $\mathcal{L}_n(\beta,\gamma)$ is particularly convenient to study limit theorems for the spectral measure $\mu_n$. To illustrate, let us consider the classical law of large numbers for the Laguerre ensemble. Suppose $\gamma=\gamma_n$ is such that $\gamma_n/n\beta'\to \tau^{-1}\in [1,\infty)$, then it is easy to see that the rescaled matrix $(n\beta)^{-1}\mathcal{L}_n(\beta,\gamma_n)$ converges in probability entrywise to the infinite tridiagonal matrix with entries
\begin{align*}
\bar c_k = \sqrt \tau \ (k\geq 1), \qquad  \bar d_1 = 1  , \quad \bar d_k = 1+\tau \ (k \geq 2) . 
\end{align*}
The spectral measure of this matrix is given by the Marchenko-Pastur law $\mu_{\mathrm{MP}(\tau)}$ with density
\begin{align}
\frac{\sqrt{(\tau^+-x)(x-\tau^-)}}{2\pi \tau x} \mathbbm{1}_{\{\tau^-<x<\tau^+\}} , 
\end{align}
with $\tau^\pm = (1\pm \sqrt{\tau})^2$. Convergence of the recursion coefficients implies that the moments of the spectral measure of $(n\beta)^{-1}\mathcal{L}_n(\beta,\gamma_n)$ converge to the moments of $\mu_{\mathrm{MP}(\tau)}$. Since this measure is compactly supported, it is uniquely determined by its moments and then the convergence of moments implies the weak convergence of the spectral measure to $\mu_{\mathrm{MP}(\tau)}$ in probability. This convergence could also be transferred to the empirical eigenvalue measure, since the Kolmogorov distance between $\mu_n$ and $\hat\mu_n$ vanishes as $n\to \infty$ \cite[Lemma 4.2]{nagel2014nonstandard}.

\section{Proofs}
\label{sec:proofs}

\subsection{Proof of Theorem \ref{thm:LDPLaguerre} (LDP)}

The ideas of the proofs and the overall technique are based on \cite{gamboa2011large} and \cite{nagel2014nonstandard}.
We start by showing an LDP for the entries of the Jacobi matrix which is subsequently transferred to the associated measure using the tools of large deviation theory. 
The first lemma is an auxiliary result for sequences of $\chi^2$-distributed random variables. 

\begin{lem}
\label{LDPLemma}
    Let $X_n \sim \chi^2_{\alpha_n +\alpha_0}$ and $Y_n \sim \chi^2_{\beta_n +\beta_0}$ be independent random variables where $\alpha_0,\beta_0\in \RR$ and let $\alpha_n$ and $\beta_n$ be positive real numbers such that $\alpha_n,\beta_n \to \infty$ and
    \begin{align*}
        \lim_{n \to \infty} \frac{\alpha_n}{\beta_n} = \infty.
    \end{align*}
    Then the vector
    \begin{align*}
        \mathcal{G}^{(n)} = \left(\frac{1}{\sqrt{\alpha_n\beta_n}}(X_n -\alpha_n),\frac{1}{\alpha_n}X_n,\frac{1}{\sqrt{\alpha_n\beta_n}} Y_n,\frac{1}{\beta_n} Y_n \right)
    \end{align*}
    satisfies an LDP in $\RR \times [0,\infty)^3$ with speed $\beta_n/2$. The good rate function $\mathcal{I}_0$ is given by
    \begin{align*}
         \mathcal{I}_0(x) = \frac{1}{2}x_1^2+x_4-1 -\log x_4
    \end{align*}
    if $x_2 = 1$, $x_3 = 0$ and $x_4 >0$, and $\mathcal{I}_0(x) = +\infty$ otherwise.
    \begin{proof}
    The moment-generating function of $X_n$ is
    \begin{align*}
        \EE\big[\exp(sX_n)\big]= (1-2s)^{-\frac{1}{2}(\alpha_n+\alpha_0)}, \quad s < \tfrac{1}{2}.
    \end{align*}
        Now, let $t \in \RR^3 \times (-\infty,1)$ and consider for $n$ large enough
        \begin{align*}
            2\beta_n^{-1} \log \EE &\exp\left\{\frac{\beta_n}{2} \langle t, \mathcal{G}^{(n)} \rangle \right\}
            =2\beta_n^{-1} \log \EE\left[ \exp\left\{ \sqrt{\frac{\beta_n}{\alpha_n}}\frac{t_1}{2}(X_n-\alpha_n)  + \frac{\beta_n}{\alpha_n}\frac{t_2}{2} X_n  + \sqrt{\frac{\beta_n}{\alpha_n}}\frac{t_3}{2} Y_n + \frac{t_4}{2} Y_n\right\} \right]   \\[0,2cm]
            &= 2\beta_n^{-1} \log \EE\left[ \exp\left\{ \Big(\sqrt{ \frac{\beta_n}{\alpha_n}} \frac{t_1}{2} + \frac{\beta_n}{\alpha_n}\frac{t_2}{2}\Big)X_n -\sqrt{\beta_n \alpha_n}\frac{t_1}{2} + \Big( \sqrt{\frac{\beta_n}{\alpha_n}} \frac{t_3}{2} + \frac{t_4}{2} \Big) Y_n\right\}\right ] \\[0,2cm]
            &= \beta_n^{-1}  \log\Big(1-\sqrt{ \frac{\beta_n}{\alpha_n}} t_1 - \frac{\beta_n}{\alpha_n} t_2\Big)^{-\alpha_n-\alpha_0} -\sqrt{\frac{\alpha_n}{\beta_n}}t_1 + \beta_n^{-1} \log\Big( 1 -\sqrt{\frac{\beta_n}{\alpha_n}} t_3 - t_4  \Big)^{-\beta_n-\beta_0} .
        \end{align*}
        Now, a Taylor expansion of the logarithm
        yields for the first two summands
        \begin{align*}
             -\frac{\alpha_n + \alpha_0}{\beta_n}  \log\Big(1-\sqrt{ \frac{\beta_n}{\alpha_n}} t_1 - \frac{\beta_n}{\alpha_n} t_2\Big) -\sqrt{\frac{\alpha_n}{\beta_n}}t_1 
             &= \frac{\alpha_0}{\alpha_n} \sqrt{\frac{\alpha_n}{\beta_n}}t_1 + \frac{\alpha_n+\alpha_0}{\alpha_n} \left( t_2 + \frac{1}{2}t_1^2 \right)+ \mathcal{O}\Big( \sqrt{\frac{\beta_n}{\alpha_n}}\Big)
             \\ &\xrightarrow[n \to \infty]{} \frac{1}{2}t_1^2+t_2 ,
        \end{align*}
        and the last summand converges to $-\log(1-t_4)$.
        The function 
        \begin{align*}
            \mathcal{C}(t) := \lim_{n\to \infty} 2\beta_n^{-1} \log \EE &\exp\left\{\frac{\beta_n}{2} \langle t, \mathcal{G}^{(n)} \rangle \right\} = \frac{1}{2}t_1^2+t_2- \log(1-t_4)
        \end{align*}
        is finite and differentiable on $\RR^3 \times (-\infty,1)$. Hence, it is possible to apply the Gärtner-Ellis Theorem, see \cite[Theorem 2.3.6]{dembo2009large}, yielding the LDP with speed $\beta_n/2$ and good rate function 
        \begin{align*}
            \mathcal{I}_0(x)= \sup_{t \in \RR^4}\left\{\langle t,x \rangle - \mathcal{C}(t)\right\},
        \end{align*}
        which is infinite unless $x_2 = 1$, $x_3 = 0$ and $x_4 >0$. In this case
        \begin{align*}
            \mathcal{I}_0(x) = \frac{1}{2}x_1^2+x_4-1 -\log x_4.
        \end{align*}
    \end{proof}
\end{lem}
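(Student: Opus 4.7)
The plan is to apply the Gärtner--Ellis theorem directly to the rescaled vector $\mathcal{G}^{(n)}$. The scale $\beta_n/2$ in the LDP is the slower of the two natural concentration scales (the fluctuation scale for $X_n$ is $\alpha_n$, that for $Y_n$ is $\beta_n \ll \alpha_n$), so all four coordinates contribute nontrivially at this scale, but the contributions of $x_2, x_3$ will be degenerate. By independence of $X_n$ and $Y_n$, the joint log-moment generating function factors, so one can analyze the $X_n$-piece and $Y_n$-piece separately.

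The computation uses $\mathbb{E}[e^{sX_n}] = (1-2s)^{-(\alpha_n+\alpha_0)/2}$ for $s<1/2$, and analogously for $Y_n$. After collecting the coefficient of $X_n$ in $\tfrac{\beta_n}{2}\langle t,\mathcal{G}^{(n)}\rangle$ (which is $\tfrac12(\sqrt{\beta_n/\alpha_n}\,t_1 + \tfrac{\beta_n}{\alpha_n}t_2)$) plus the recentering $-\sqrt{\alpha_n\beta_n}\,t_1/2$, the $Y_n$-coefficient $\tfrac12(\sqrt{\beta_n/\alpha_n}\,t_3 + t_4)$, and dividing by $\beta_n/2$, a Taylor expansion of $-\log(1-u)=u+u^2/2+O(u^3)$ applied to the $X_n$-logarithm produces a linear-in-$t_1$ term of size $\sqrt{\alpha_n/\beta_n}$ that exactly cancels the recentering term, leaving $\tfrac12 t_1^2 + t_2$ in the limit; the $Y_n$-piece tends to $-\log(1-t_4)$ since $\sqrt{\beta_n/\alpha_n}\to 0$. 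Thus
\begin{align*}
\mathcal C(t) \;=\; \lim_{n\to\infty}\tfrac{2}{\beta_n}\log \mathbb E\exp\!\Bigl(\tfrac{\beta_n}{2}\langle t,\mathcal G^{(n)}\rangle\Bigr) \;=\; \tfrac12 t_1^2 + t_2 - \log(1-t_4),
\end{align*}
finite and $C^\infty$ on the open set $\mathbb{R}^3\times(-\infty,1)$, and steep at $t_4=1$ since $\partial_{t_4}\mathcal C=1/(1-t_4)\to\infty$.

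The Gärtner--Ellis theorem then delivers the LDP with speed $\beta_n/2$ and good rate function $\mathcal I_0=\mathcal C^*$. The Legendre transform splits across coordinates: $\mathcal C$ does not depend on $t_3$, so $\sup_{t_3}t_3 x_3$ is $+\infty$ unless $x_3=0$; $\mathcal C$ is affine in $t_2$ with slope $1$, so $\sup_{t_2}(x_2-1)t_2$ is $+\infty$ unless $x_2=1$; the $t_1$-supremum of $t_1 x_1 - \tfrac12 t_1^2$ equals $\tfrac12 x_1^2$; and $\sup_{t_4<1}\{t_4 x_4 + \log(1-t_4)\}$ is attained at $t_4 = 1-1/x_4$ for $x_4>0$ with value $x_4 - 1 - \log x_4$, and equals $+\infty$ otherwise. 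Assembling these pieces reproduces the stated $\mathcal I_0$.

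The main subtlety is bookkeeping in the Taylor expansion of the $X_n$-logarithm: the prefactor $(\alpha_n+\alpha_0)/\beta_n$ is unbounded, while the argument tends to $1$, so one must expand to second order in $u=\sqrt{\beta_n/\alpha_n}\,t_1+\tfrac{\beta_n}{\alpha_n}t_2$ and verify that (i) the $O(\sqrt{\alpha_n/\beta_n})$ linear-in-$t_1$ term cancels exactly with the recentering contributed by $-\sqrt{\alpha_n\beta_n}\,t_1/2$, (ii) the quadratic-in-$t_1$ and linear-in-$t_2$ terms of the expansion give finite limits, and (iii) the remainder $O(u^3)$ multiplied by $(\alpha_n+\alpha_0)/\beta_n$ still tends to $0$ because it carries an extra factor $\sqrt{\beta_n/\alpha_n}$. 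Once this cancellation is pinned down, the rest is a textbook application of Gärtner--Ellis.
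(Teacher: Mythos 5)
Your proposal is correct and follows essentially the same route as the paper: factor the log-MGF by independence, Taylor-expand the $X_n$-logarithm to second order so that the linear term cancels the recentering, obtain $\mathcal C(t)=\tfrac12 t_1^2+t_2-\log(1-t_4)$, and apply Gärtner--Ellis. Your extra remarks on steepness at $t_4=1$ and the explicit coordinate-by-coordinate Legendre transform are correct and slightly more detailed than the paper's write-up.
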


\begin{proof}[Proof of Theorem \ref{thm:LDPLaguerre}]
The measure $\mu_n$ with support points $\tilde \lambda_i$ as in \eqref{eq:ScalingLDP} is the spectral measure of the rescaled tridiagonal matrix 
\begin{align} \label{eq:Lrescaled}
\tilde{\mathcal{L}}_n(\beta,\gamma) := \frac{1}{\sqrt{2\gamma_nn\beta}}\left( \mathcal{L}_n(\beta,\gamma) - 2\gamma_n I_n\right) ,
\end{align}
with $\mathcal{L}_n(\beta,\gamma)$ the tridiagonal matrix of the Laguerre ensemble defined by \eqref{MatrixEntriesLaguerre} and $I_n$ is the $n\times n$ identity matrix. Writing $\tilde d_k, \tilde c_k$ for the entries of the tridiagonal matrix $\tilde{\mathcal{L}}_n(\beta,\gamma)$, we have 
\begin{align*}
    \tilde{d}_k = \frac{1}{\sqrt{2\gamma_nn\beta}} z_{2k-2}+  \frac{1}{\sqrt{2\gamma_nn\beta}}( z_{2k-1} -2\gamma_n) 
\end{align*}
for $1\leq k\leq n$, with $z_0 = 0$. The off-diagonal elements are the square root of
\begin{align*}
    \tilde{c}^2_k:=\left( \frac{1}{2 \gamma_n}z_{2k-1}\right)\left(\frac{1}{ n\beta} z_{2k} \right).
\end{align*}
for $1\leq k\leq n-1$. 
We then apply Lemma \ref{LDPLemma} with $\alpha_n = 2\gamma_n$ and $\beta_n = n \beta$ and obtain an LDP for 
\begin{align*}
    \left(\frac{1}{\sqrt{2\gamma_nn\beta}}( z_{2k-1} -2\gamma_n),\frac{1}{2 \gamma_n}z_{2k-1},\frac{1}{\sqrt{2\gamma_nn\beta}} z_{2k},\frac{1}{ n\beta} z_{2k} \right)
\end{align*}
in $\RR \times [0,\infty)^3$ with speed $n \beta'$ and good rate function
\begin{align*}
         \mathcal{I}_1(x) = \frac{1}{2}x_1^2+x_4-1-\log (x_4)
\end{align*}
if $x_2 = 1$, $x_3 = 0$ and $x_4 >0$.
Define for $1\leq m \leq 2n-1$
\begin{align*}
    y_m^{(n)} = \begin{cases}
        \left( \frac{1}{\sqrt{2\gamma_n n\beta}}(z_{2k-1}-2\gamma_n), \frac{1}{2\gamma_n}z_{2k-1}\right), & m = 2k-1\\[2mm]
       \left( \frac{1}{\sqrt{2\gamma_n n\beta}}z_{2k},\frac{1}{n\beta}z_{2k}\right), & m = 2k 
   \end{cases}
\end{align*}
and set $y_m^{(n)}=0$ if $m\geq 2n-1$. We let $y_{m,1}$ denote the first and $y_{m,2}$ the second entry of $y_m$.  
Because $z_1,z_2,...,z_{2n-1}$ are independent, \cite[Exercise 4.2.7]{dembo2009large} yields that for any $K\geq 1$, the sequence $(y_1^{(n)}, \dots ,y_{2K}^{(n)})_n$ satisfies the LDP in $(\RR \times [0,\infty)^3)^K$ with speed $n\beta'$ and good rate function 
\begin{align*}
\mathcal I_2(y_1,\dots ,y_{2K}) = \sum_{k=1}^K \mathcal{I}_1(y_{2k-1,1},y_{2k-1,2},y_{2k,1},y_{2k,2}) . 
\end{align*}
We now apply the projective method of the Dawson-Gärtner Theorem, see \cite[Theorem 4.6.1]{dembo2009large}. It yields the LDP for the sequence of infinite vectors
\begin{align*}
    y^{(n)}:= (y_{1,1},y_{1,2},y_{2,1},y_{2,2},...) :=
    \left(  \frac{1}{\sqrt{2\gamma_n n\beta}}(z_{1}-2\gamma_n), \frac{1}{2\gamma_n}z_{1},\frac{1}{\sqrt{2\gamma_n n\beta}}z_{2},\frac{1}{n\beta}z_{2},...,\frac{1}{2\gamma_n}z_{2n-1}, 0 ,\dots  \right)
\end{align*}
in $\big(\RR\times [0,\infty)^3\big)^\NN$ with speed $n\beta'$ and good rate function
\begin{align*}
    \mathcal{I}_3(y) = \sum_{k=1}^\infty \mathcal{I}_1(y_{2k-1,1},y_{2k-1,2},y_{2k,1},y_{2k,2}) .
\end{align*}
This rate function is infinite unless $y_{2k-1,2} = 1$, $y_{2k,1} = 0$ and $y_{2k,2} >0$ for all $k \in \NN$ and in this case has the form
\begin{align*}
    \mathcal{I}_3(y) = \sum_{k=1}^\infty \frac{1}{2}y^2_{2k-1,1} + y_{2k,2} -1 -\log y_{2k,2}.
\end{align*}
Now, let $r^{(n)}=(\tilde{d}_1,\tilde{c}_1,\tilde{d}_2,\tilde{c}_2,\dots , \tilde d_n,0,...)$ denote the entries of $\tilde{\mathcal{L}}_n(\beta,\gamma)$ as in \eqref{eq:Lrescaled}, continued by zeros. Then $r^{(n)}$ is a continuous function of $y^{(n)}$. It can be formalized by
\begin{align*}
    \Phi: \RR^\NN \to \RR^\NN, \quad \Phi_1(x) = x_1, \quad \Phi_{2k}(x) = x_{4k-2}x_{4k}, \quad \Phi_{2k+1}(x) = x_{4k-1} + x_{4k+1}, \quad \text{for } k \in \NN  
\end{align*}
such that $r^{(n)} = \Phi(y^{(n)})$. The contraction principle yields an LDP for $(r^{(n)})_n$ in $\{x\in \RR^\NN \mid x_{2k}\geq 0  \text{ for } k \in \NN\} $, endowed with the product topology, with speed $n\beta'$ and good rate function 
\begin{align*}
    \mathcal{I}_4(d_1,c_1,d_2,c_2,...) = \sum_{k=1}^\infty \frac{1}{2}d_k^2+c_k^2-1-\log c_k^2
\end{align*}
if $c_{k}>0$ for all $k\geq 1$ and $\mathcal{I}_4(d_1,c_1,d_2,c_2,...)=\infty$ otherwise. It remains to map the sequence of coefficients to the spectral measure. 
To obtain a continuous mapping to the set of probability measures on $\RR$ with compact support, we first restrict the LDP to the set
\begin{align*} 
   \mathcal{E} :=  \bigg(\big\{ x \mid x_{2k}> 0  \big\} \cup \bigcup_{n=1}^\infty \{(x_1,...,x_{2n},0,...), \;\; x_{2k}>0 \; \forall k \in\{1,...,n\}  \big\}\bigg) \cap \{x \mid \sup_k |x_k| < \infty\}
\end{align*}
which is equipped with the subspace topology. All $x \in \{x\in \RR^\NN \mid x_{2k}\geq 0  \text{ for } k \in \NN\} $ for which $\mathcal{I}_4(x)<\infty$ are elements of $\mathcal{E}$, hence \cite[Lemma 4.1.5]{dembo2009large} may be applied and we obtain that $(r^{(n)})_n$ satisfies the LDP in $\mathcal{E}$.
Finally, the mapping
\begin{align*}
    \varphi: \mathcal{E} \to \mathcal{M}_1^c:=\{ \mu \mid \mu \text{ probability measure on } \RR \text{ with compact support}\},
\end{align*}
which maps a sequence of Jacobi coefficients to its spectral measure, is a bijection, see \cite[Chapter 2.1]{deift2000orthogonal} for the construction and more information. When $\mathcal{M}_1^c$ is endowed with the weak topology, $\varphi$ is continuous: suppose $x_n \to x$ in $\mathcal E$, then the Jacobi matrix build from $x_n$ converges entrywise to the Jacobi matrix build from $x$, such that, according to \eqref{eq:spectralmeasuremoments}, 
\begin{align*}
m_k(\varphi(x_n)) \xrightarrow[n\to \infty]{} m_k(\varphi(x)) 
\end{align*}
for all $k\geq 1$. This yields the continuity of $\varphi$ because convergence of moments yields weak convergence if $\varphi(x)$ is compactly supported. We have $\mu_n= \varphi(r^{(n)})$, so that by the contraction principle, the sequence $(\mu_n)_n$ satisfies an LDP in $\mathcal{M}_1^c$ with speed $n\beta'$ and good rate function
\begin{align*}
    \mathcal{I}(\mu) = \inf_{x \in \mathcal{E}: \varphi(x) = \mu } \mathcal{I}_4(x).
\end{align*}
This infimum is infinite unless $\mu$ is nontrivial (i.e. the support of $\mu$ is not a finite set of points) and since $\varphi$ is bijective, it is given by $\mathcal{I}(\mu) = \mathcal{I}_4(\varphi^{-1}(\mu))$. By the sum rule of Killip and Simon \cite{killip2003sum}, $\mathcal I$ is then as given in \eqref{eq:sumrule}.   
\end{proof}

\subsection{Proof of Theorem \ref{thm:MainMDP} (MDP)}
\label{section:MDP}

The technique used is similar to the one in \cite{nagel2013moderatedeviationsspectralmeasures}.
First, an MDP for the entries of the Jacobi matrix is proven, which is then transferred to an MDP for the random moments. Like last section, the first lemma is an auxiliary result for sequences of $\chi^2$-distributed random variables.
\begin{lem}
    \label{MDPLemma}
    Let $X_n \sim \chi^2_{\alpha_n+\alpha_0} $ and $Y_n \sim \chi^2_{{\beta}_n+\beta_0}$ be independent random variables where $\alpha_0,\beta_0 \in \RR$ and $\alpha_n$ and $\beta_n$ are positive real numbers such that $\alpha_n, \beta_n \to \infty$ and
    \begin{align*}
         \lim_{n \to \infty} \frac{\alpha_n}{\beta_n} = \infty.
    \end{align*}
    Additionally, let $(b_n)_n$ be a real sequence such that $b_n \to \infty$,
    \begin{align*}
        \lim_{n \to \infty}\frac{b_n}{\alpha_n} = 0\quad \text{and} \quad \lim_{n \to \infty}\frac{\beta_n}{\sqrt{2b_n \alpha_n}} = \xi \in [0,\infty).
    \end{align*}
    Then the vector
    \begin{align*}
        \mathcal{G}^{(n)} = \sqrt{\frac{\beta_n}{2b_n}} \left(\frac{1}{\sqrt{\alpha_n\beta_n}}(X_n -\alpha_n),\frac{1}{\alpha_n}X_n-1,\frac{1}{\sqrt{\alpha_n\beta_n}} Y_n,\frac{1}{\beta_n} Y_n -1\right)
    \end{align*}
    satisfies an LDP in $\RR \times [-1,\infty) \times \RR \times [-1,\infty)$ with speed $b_n$ and good rate function $\mathcal{I}_0$ given by
    \begin{align*}
        \mathcal{I}_0(x) = \frac{1}{2}(x_1^2+x_4^2)
    \end{align*}
    if $x_2=0$ and $x_3 = \xi$, and $\mathcal{I}_0(x) = +\infty$ otherwise.
    \begin{proof}
        As in Lemma \ref{LDPLemma}, we apply the Gärtner-Ellis Theorem \cite[Theorem 2.3.6]{dembo2009large}. For that consider for $t \in \RR^4$ and $n$ large enough
        \begin{align*}
            b_n^{-1}\log \EE&\big[\exp\big(b_n \langle t, \mathcal{G}^{(n)} \rangle\big)\big] \\
            =&b_n^{-1} \log \EE\Bigg[\exp\left(\left(\sqrt{\frac{b_n}{\alpha_n}}\frac{t_1}{\sqrt{2}}+\frac{\sqrt{b_n \beta_n}}{\alpha_n} \frac{t_2}{\sqrt{2}}\right)X_n- \sqrt{b_n\alpha_n}\frac{t_1}{\sqrt{2}} - \sqrt{b_n\beta_n}\frac{t_2}{\sqrt{2}}\right)\Bigg]\\
            +& b_n^{-1} \log \EE\Bigg[\exp\left(\left(\sqrt{\frac{b_n}{\alpha_n}}\frac{t_3}{\sqrt{2}}+\sqrt{\frac{b_n }{\beta_n}}\frac{t_4}{\sqrt{2}}\right)Y_n- \sqrt{b_n\beta_n}\frac{t_4}{\sqrt{2}}\right)\Bigg] \\
            = &b_n^{-1} \log\left(1-\sqrt{\frac{2b_n}{\alpha_n}}t_1-\frac{\sqrt{2b_n \beta_n}}{\alpha_n}t_2\right)^{-\frac{1}{2}({\alpha}_n+\alpha_0)} - \sqrt{\frac{\alpha_n}{2b_n}}t_1 - \sqrt{\frac{\beta_n}{2b_n}}t_2 \\[2mm]
            +& b_n^{-1} \log\left(1-\sqrt{\frac{2b_n}{\alpha_n}}t_3-\sqrt{\frac{2b_n }{\beta_n}}t_4\right)^{-\frac{1}{2}({\beta}_n+\beta_0)} - \sqrt{\frac{\beta_n}{2b_n}}t_4. 
        \end{align*}
        In order to calculate the limit, the Taylor expansion for the logarithm  is used. 
        For the first part, we have 
        \begin{align*}
            -\frac{\alpha_n+\alpha_0}{2b_n}&\log\left(1-\sqrt{\frac{2b_n}{\alpha_n}}t_1 - \frac{\sqrt{2b_n\beta_n}}{\alpha_n}\right)
            - \sqrt{\frac{\alpha_n}{2b_n}}t_1 - \sqrt{\frac{\beta_n}{2b_n}}t_2 \\[2mm]
            &= \frac{1}{2}t_1^2+ \frac{1}{2} \frac{\beta_n}{\alpha_n} t_2^2 + \sqrt{\frac{\beta_n}{\alpha_n}} t_1t_2 + \mathcal{O}\Big( \sqrt{\frac{b_n}{\alpha_n}}\Big)
            \xrightarrow[n \to \infty]{} \frac{1}{2} t_1^2.
        \end{align*}
        For the second part, we obtain
        \begin{align}
        \label{eq:TaylorMDPxi}
        \begin{split}
            -\frac{\beta_n+\beta_0}{2b_n}&\log\left( 1-\sqrt{\frac{2b_n}{\alpha_n}}t_3-\sqrt{\frac{2b_n }{\beta_n}}t_4\right) - \sqrt{\frac{\beta_n}{2b_n}}t_4 \\
            &= \frac{\beta_n}{\sqrt{2b_n \alpha_n}}t_3+\frac{1}{2} \frac{\beta_n}{\alpha_n}t_3^2 +\frac{1}{2}t_4^2 + \sqrt{\frac{\beta_n}{\alpha_n}}t_3t_4 + \mathcal{O}\Big( \sqrt{\frac{b_n}{\alpha_n}}\Big)
            \xrightarrow[n \to \infty]{} \xi t_3 +\frac{1}{2}t_4^2. 
            \end{split}
        \end{align}
        The function
        \begin{align}
        \label{eq:CtMDP}
            \mathcal{C}(t) := \lim_{n\to \infty } b_n^{-1}\log \EE&\big[\exp\big(b_n \langle t, \mathcal{G}^{(n)} \rangle\big)\big] = \frac{1}{2} (t_1^2+t_4^2) + \xi t_3
        \end{align}
        is finite and differentiable on $\RR^4$. Then the Gärtner-Ellis Theorem yields an LDP for $(\mathcal G^{(n)})_n$ with speed $b_n$ and good rate function
        \begin{align*}
            \mathcal{I}_0(x) = \sup_{t \in \RR^4}\{ \langle t, x \rangle- \mathcal{C}(t)\}
        \end{align*}
        which is infinite unless $x_2=0$ and $x_3 = \xi$ and in that case $\mathcal I_0(x)$ is as stated.
    \end{proof}
\end{lem}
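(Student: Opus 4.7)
My plan is to apply the Gärtner-Ellis theorem directly to $\mathcal{G}^{(n)}$, exploiting the independence of $X_n$ and $Y_n$ and the explicit form of the chi-squared moment generating function $\mathbb{E}[\exp(sW)] = (1-2s)^{-r/2}$ for $W \sim \chi^2_r$ and $s < 1/2$. Concretely, for $t = (t_1,t_2,t_3,t_4) \in \mathbb{R}^4$, I would expand $b_n \langle t, \mathcal{G}^{(n)}\rangle$ as a linear combination of $X_n$ and $Y_n$ plus deterministic centering, so that $b_n^{-1} \log \mathbb{E}[\exp(b_n \langle t, \mathcal{G}^{(n)}\rangle)]$ splits into an $X_n$-contribution and a $Y_n$-contribution. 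In each contribution, the coefficient of the random variable is of order $\sqrt{b_n/\alpha_n} \to 0$, so the chi-squared MGF is well-defined for $n$ large enough and the logarithm can be Taylor-expanded.

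The crucial step is the bookkeeping in the Taylor expansion $-\log(1-u) = u + u^2/2 + O(u^3)$. The first-order terms in the expansion are designed to cancel with the centering constants ($-\sqrt{\alpha_n/(2b_n)}\, t_1$ from $X_n$ and $-\sqrt{\beta_n/(2b_n)}\, t_4$ from $Y_n$). What survives from the second-order expansion is $\tfrac{1}{2}t_1^2$ for $X_n$ and $\tfrac{1}{2}t_4^2$ for $Y_n$, while cross-terms such as $\sqrt{\beta_n/\alpha_n}\, t_1 t_2$ and $\sqrt{\beta_n/\alpha_n}\, t_3 t_4$ vanish because $\beta_n/\alpha_n \to 0$. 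The key asymmetry is that on the $Y_n$ side the uncentered linear-in-$t_3$ contribution $\tfrac{\beta_n}{\sqrt{2 b_n \alpha_n}}\, t_3$ does not cancel with a centering term and instead converges to $\xi t_3$, whereas on the $X_n$ side the analogous term involving $t_2$ acquires an extra factor $\beta_n/\alpha_n \to 0$ and therefore disappears. All remaining error terms are $O(\sqrt{b_n/\alpha_n})$ and vanish by hypothesis. This yields the limiting cumulant function $\mathcal{C}(t) = \tfrac{1}{2}(t_1^2 + t_4^2) + \xi t_3$.

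Since $\mathcal{C}$ is finite, differentiable, and even steep on $\mathbb{R}^4$, the Gärtner-Ellis theorem applies and delivers the LDP with speed $b_n$ and rate function $\mathcal{I}_0(x) = \sup_{t} \{\langle t,x\rangle - \mathcal{C}(t)\}$. The Legendre transform is computed coordinatewise: $\mathcal{C}$ is linear in $t_2$ with coefficient $0$, so the supremum is $+\infty$ unless $x_2 = 0$, and linear in $t_3$ with coefficient $\xi$, so the supremum is $+\infty$ unless $x_3 = \xi$; the quadratic optimization in $t_1$ and $t_4$ then gives $\tfrac{1}{2}x_1^2$ and $\tfrac{1}{2}x_4^2$ respectively.

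The main obstacle is the precise Taylor bookkeeping in the regime where three small/large parameters interact: verifying that the only nonvanishing contributions in the limit are the two quadratic terms and the single linear shift $\xi t_3$, while all cross terms involving $\sqrt{\beta_n/\alpha_n}$, $b_n/\alpha_n$, or $\beta_n \beta_0/(\alpha_n b_n)$ (from the fixed offsets $\alpha_0, \beta_0$) are controlled and tend to zero. Once this is handled, the identification of the rate function is mechanical.
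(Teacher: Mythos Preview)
Your proposal is correct and follows essentially the same approach as the paper: apply the G\"artner--Ellis theorem, use independence of $X_n$ and $Y_n$ together with the explicit $\chi^2$ moment generating function, Taylor-expand the logarithm, and verify that the limiting cumulant is $\mathcal{C}(t)=\tfrac12(t_1^2+t_4^2)+\xi t_3$ before computing its Legendre transform. Your identification of the surviving terms, in particular the uncancelled linear contribution $\tfrac{\beta_n}{\sqrt{2b_n\alpha_n}}\,t_3\to\xi t_3$ on the $Y_n$ side versus the vanishing analogue on the $X_n$ side, matches the paper's computation exactly.
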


In the following lemma the dimension of the moment vector plays an important role since it is easier to apply the Delta method to MDPs on sets with finite dimension. To specify this, we denote the truncation of $x \in \RR^\NN$ by $x_{[K]} = (x_k)_{1\leq k \leq K}$.
In particular, the truncated moment sequence of $\nu \in \mathcal{M}_0^f$ is denoted by $m_{[K]}(\nu) = (m_k(\nu))_{1 \leq k \leq K} \in \RR^K$.

An important part in the MDP is played by the matrix $D\in \RR^{\NN \times \NN}$ given by
\begin{align}
    \label{eq:D}
        D_{i,j} = 
        \binom{i}{\frac{i-j}{2}} - \binom{i}{\frac{i-j}{2}-1} 
\end{align}
if $i \geq j $ and $i+j$ even and $D_{i,j}=0$ otherwise.
The truncated matrix $D_K \in \RR^{K\times K}$ is defined by $(D_K)_{i,j} = D_{i,j}$ for $1 \leq i,j \leq K$. 

\begin{lem}
    \label{MomentsMDP}
    Let $(\gamma_n)_n$ and $(b_n)_n$ be real sequences that satisfy
\begin{align*}
    \lim_{n \to \infty} \frac{\gamma_n}{n} = \infty ,\quad \lim_{n \to \infty}\frac{b_n}{n} = 0 \quad \text{and} \quad \lim_{n \to \infty}\frac{n\beta'}{\sqrt{b_n\gamma_n}} = \xi \in [0,\infty).
\end{align*}
    Let $\mu_n$ denote the spectral measure of the Laguerre Ensemble with rescaled eigenvalues as in \eqref{eq:spectralmeasure}
        and $\mu_{\text{sc}}$ denote the semicircle law. 
            Then the sequence of $(2K-1)$-dimensional moment vectors 
            $$\sqrt{\frac{n\beta'}{b_n}}\big(m_{[2K-1]}(\mu_n)-m_{[2K-1]}(\mu_{\text{sc}})\big)$$
            satisfies an LDP in $\RR^{2K-1}$, equipped with the product topology, with speed $b_n$ and good rate function
        \begin{align*}
            \mathcal{I}(m_{[2K-1]}) = \frac{1}{2}\left\Vert D_{2K-1}^{-1} \big(m_{[2K-1]} - D_{2K-1}w_{[2K-1]} \big)  \right\Vert_2^2
        \end{align*}
        where $w = (0,0,\xi,0,\xi,0,\xi,...)^\top$ and $D_{2K-1} \in \RR^{(2K-1) \times (2K-1)}$ as above.
\end{lem}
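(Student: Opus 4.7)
The strategy is to derive an MDP for the Jacobi coefficients of the rescaled matrix $\tilde{\mathcal L}_n(\beta,\gamma_n)$ via Lemma~\ref{MDPLemma}, and then transfer it to the moments through a linearization argument. First I apply Lemma~\ref{MDPLemma} to each of the first $K$ independent pairs $(z_{2k-1}, z_{2k})$ with $X_n = z_{2k-1}$, $Y_n = z_{2k}$, $\alpha_n = 2\gamma_n$ and $\beta_n = n\beta$. The scalings match: $\sqrt{\beta_n/(2b_n)} = \sqrt{n\beta'/b_n}$ and $\beta_n/\sqrt{2b_n\alpha_n} = n\beta'/\sqrt{b_n\gamma_n} \to \xi$. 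By independence and \cite[Exercise 4.2.7]{dembo2009large}, this yields a joint MDP on $(\RR^4)^K$ with speed $b_n$ and rate function $\tfrac12\sum_{k=1}^K\big((x_1^{(k)})^2 + (x_4^{(k)})^2\big)$, restricted to $\{x_2^{(k)} = 0,\ x_3^{(k)} = \xi\ \text{for all }k\}$ and infinite elsewhere.

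Second, a direct computation expresses the rescaled Jacobi entries of $\tilde{\mathcal L}_n(\beta,\gamma_n)$ as continuous functions of the $x^{(k)}$'s plus an error which is exponentially negligible at speed $b_n$: one finds $\sqrt{n\beta'/b_n}\,\tilde d_k = x_1^{(k)} + x_3^{(k-1)}\mathbbm{1}_{\{k\geq 2\}}$ and $\sqrt{n\beta'/b_n}\,(\tilde c_k^2 - 1) = x_2^{(k)} + x_4^{(k)} + \sqrt{b_n/(n\beta')}\,x_2^{(k)} x_4^{(k)}$. The contraction principle together with exponential equivalence \cite[Theorem~4.2.13]{dembo2009large} then gives an MDP with speed $b_n$ for the ordered vector $y = (\tilde d_1, \tilde c_1^2 - 1, \tilde d_2, \tilde c_2^2 - 1, \dots)$ with rate $\tfrac12\|y - w\|_2^2$ and $w = (0, 0, \xi, 0, \xi, 0, \dots)^\top$. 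The shift $\xi$ at the positions of $\tilde d_k$ for $k\geq 2$ arises precisely because $x_3^{(k-1)}$ is pinned at $\xi$.

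Finally, I linearize the polynomial moment map $\Phi : (d_1, c_1^2, d_2, c_2^2, \dots, d_K) \mapsto (m_j)_{j=1}^{2K-1}$ around the semicircle Jacobi point $(0, 1, 0, 1, \dots, 0)$. Since $m_j = \langle e_1, \mathcal J^j e_1 \rangle$ enumerates weighted length-$j$ paths on $\NN$ returning to $1$, the Jacobian at the free Jacobi matrix counts such paths with exactly one marked edge or vertex, and a ballot-type path count gives precisely $D_{j,\ell} = \binom{j}{(j-\ell)/2} - \binom{j}{(j-\ell)/2 - 1}$ (one checks directly the cases $m_1 = d_1$, $m_2 - 1 = (c_1^2-1)$, $m_3 = 2 d_1 + d_2$, $m_4 - 2 = 3(c_1^2-1) + (c_2^2-1)$, and so on). Because the Jacobi deviations at MDP scale are of order $\sqrt{b_n/(n\beta')}\to 0$, the Taylor remainder of $\Phi$ is exponentially negligible after multiplication by $\sqrt{n\beta'/b_n}$; a further exponential equivalence replaces $\Phi$ by its linearization $D_{2K-1}$, and the contraction principle applied to the linear map $y \mapsto D_{2K-1}\,y$ yields the rate function $\inf\{\tfrac12\|y - w\|_2^2 : D_{2K-1}\,y = m_{[2K-1]}\} = \tfrac12\|D_{2K-1}^{-1}(m_{[2K-1]} - D_{2K-1}w)\|_2^2$, using that $D_{2K-1}$ is lower triangular with unit diagonal. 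I expect the main difficulty to be the combinatorial identification of $D$ in the two interleaved cases ($j,\ell$ both odd, giving $d$-perturbations, versus both even, giving $c^2$-perturbations) together with a careful exponential-equivalence argument that uses the sub-Gaussian tails of the $\chi^2$ variables to dominate the higher-order polynomial terms of $\Phi$ uniformly on MDP-level sets.
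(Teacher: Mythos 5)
Your proposal follows essentially the same route as the paper: the $\chi^2$ moderate-deviation Lemma~\ref{MDPLemma} applied blockwise with $\alpha_n=2\gamma_n$, $\beta_n=n\beta$, transfer to the entries of the rescaled Jacobi matrix $\tilde{\mathcal{L}}_n(\beta,\gamma)$ (yielding the shift vector $w=(0,0,\xi,0,\xi,\dots)$ from the pinned value $x_3=\xi$), and finally linearization of the Jacobi-coefficients-to-moments map at the free Jacobi point with Jacobian $D_{2K-1}$. The only differences are minor: where you propose explicit exponential-equivalence estimates to discard the product term $x_2^{(k)}x_4^{(k)}$ and the Taylor remainder of the moment map, the paper packages exactly these steps via the Delta method for MDPs, and where you derive $D_{2K-1}$ by a ballot-type path count (your low-order checks are correct), the paper cites the computation from the literature.
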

\begin{proof} 
The diagonal entries of the rescaled $n$-dimensional Jacobi matrix \eqref{eq:Lrescaled} of $\mu_n$ can be rewritten using (\ref{MatrixEntriesLaguerre}) as
\begin{align*}
    \Tilde{d}_k:= \frac{1}{\sqrt{2\gamma_nn\beta  }}(z_{2k-1} -2\gamma_n) + \frac{1}{\sqrt{2\gamma_nn\beta  }}z_{2k-2}
\end{align*}
for $1\leq k \leq n$, with $z_0=0$. The off-diagonal entries are the squareroot of
\begin{align*}
    \Tilde{c}_k^2:= \frac{1}{2\gamma_n}z_{2k-1}\frac{1}{n\beta } z_{2k}
\end{align*}
for $1 \leq k \leq n-1$. Applying Lemma \ref{MDPLemma} with $\alpha_n = 2\gamma_n$ and $\beta_n = n \beta$ to 
\begin{align}
\label{eq:GOriginalMDP}
    \sqrt{\frac{n\beta' }{b_n}} \left(\left( \frac{1}{\sqrt{2\gamma_nn\beta  }}(z_{2k-1} -2\gamma_n),\frac{1}{2\gamma_n}z_{2k-1},\frac{1}{\sqrt{2\gamma_n n\beta  }} z_{2k},\frac{1}{n\beta } z_{2k}\right) - (0,1,0,1)\right)
\end{align}
yields that this vector fulfills an LDP in $\RR \times [-1,\infty) \times \RR \times [-1,\infty)$ with speed $b_n$ and good rate function
\begin{align*}
    \mathcal{I}_1(x) = \frac{1}{2}(x_1^2+x_4^2)
\end{align*}
if $x_2=0$ and $x_3 = \xi$. Now, define
\begin{align}
\label{ErsteFktDelta}
    \Phi: \RR \times [0,\infty) \times \RR \times [0,\infty) \to \RR\times [0,\infty) \times \RR, \quad \Phi(x_1,x_2,x_3,x_4) = (x_1,x_2\cdot x_4,x_3)
\end{align}
which has the Jacobian matrix
\begin{align*}
    \frac{\partial \Phi(x)}{\partial x} = \left(\begin{matrix} 1 & 0 & 0 &0\\0&x_4&0&x_2 \\0 & 0 & 1 & 0 \end{matrix}\right).
\end{align*}
The Delta method for MDPs, see \cite{gao2011delta}, then yields an LDP for
\begin{align*}
    \sqrt{\frac{n\beta'}{b_n}}\left( \left(\frac{1}{\sqrt{2\gamma_nn\beta  }}(z_{2k-1} -2\gamma_n),\frac{1}{2\gamma_n n\beta }z_{2k-1}z_{2k} ,\frac{1}{\sqrt{2\gamma_n n\beta  }} z_{2k}\right)-(0,1,0)\right)
\end{align*}
with speed $b_n$ and good rate function 
\begin{align*}
    \mathcal{I}_2(y) = \inf\left\{ \mathcal{I}_1(x) : \frac{\partial \Phi(h)}{\partial h}|_{h=(0,1,0,1)} \cdot x = y \right\}
    = \begin{cases}
        \frac{1}{2}(y_1^2+y_2^2) & \text{if } y_3=\xi \\ \infty & \text{else}.
    \end{cases}
\end{align*}
Because $z_1,z_2,...,z_{2K}$ are independent, \cite[Exercise 4.2.7]{dembo2009large} yields that for any $K\geq 1$
\begin{align*}
    p^{(n)} :=  \sqrt{\frac{n\beta'}{b_n}} \left(\frac{1}{\sqrt{2\gamma_nn\beta  }}(z_{1} -2\gamma_n),\frac{1}{2\gamma_n n\beta }z_{1}z_{2} -1,\frac{1}{\sqrt{2\gamma_n n\beta  }} z_{2},...,\frac{1}{\sqrt{2\gamma_nn\beta  }}z_{2K}\right)
\end{align*}
satisfies an LDP in $(\RR\times [0,\infty) \times \RR )^{K}$ with speed $b_n$ and good rate function
\begin{align*}
    \mathcal{I}_3(p_{[3K]}) = \sum_{k=1}^{K} \mathcal{I}_2(p_{3k-2},p_{3k-1},p_{3k})
\end{align*}
which is infinite unless $p_{3k}=\xi$ for all $k \geq 1$ and then has the form 
\begin{align*}
    \mathcal{I}_3(p_{[3K]}) = \frac{1}{2} \sum_{k=1}^{K} p_{3k-2}^2+p_{3k-1}^2.
\end{align*}
We now apply the contraction principle with the function
\begin{align}
\label{zweiteFktContraction}
\begin{gathered}
    \Psi: (\RR\times [0,\infty) \times \RR )^{K}\to (\RR \times [0,\infty))^{K-1} \times \RR\\[0,2cm]
     \Psi_1(x) = x_1, \quad \Psi_{2k}(x) = x_{3k-1}, \quad \Psi_{2k+1}(x) = x_{3k}+x_{3k+1}
\end{gathered}
\end{align}
for $k \in \{1,...,K-1\}$, which yields an LDP in $(\RR \times [0,\infty))^{K-1} \times \RR$ for 
\begin{align*}
r_{[2K-1]}^{(n)}=\sqrt{\frac{n\beta'}{b_n}}[(\Tilde{d}_1,\Tilde{c}^2_1,\Tilde{d}_2,\Tilde{c}^2_2,...,\Tilde{d}_K)-(0,1,0,1,...,0)]
\end{align*}
with speed $b_n$ and good rate function
\begin{align*}
    \mathcal{I}_4(x) 
    &= \inf\{\mathcal{I}_3(z) : \Psi(z) = x\} 
    = \frac{1}{2}x_{1}^2 +\frac{1}{2} \sum_{k=1}^{2K-1}\left((x_{2k+1}-\xi)^2 + x_{2k}^2 \right)
    = \frac{1}{2} \Vert x_{[2K-1]} - w_{[2K-1]} \Vert_2^2 
\end{align*}
where $w = (0,0,\xi,0,\xi,0,\xi,...)^\top$.
Now, a mapping
\begin{align}
\label{AbbMomKoeff}
    \varphi_K:  r_{[2K-1]}=(d_1,c_1^2,...,c^2_{K-1},d_K) \mapsto m_{[2K-1]} 
\end{align}
 from $(\RR \times [0,\infty))^{K-1} \times \RR$ 
to the set $\mathbb{M}_{2K-1}$ of first $2K-1$ moments of probability measures can be defined as a composition $\varphi = g \circ h$ where 
 $$g:(\RR \times [0,\infty))^{K-1} \times \RR \to \mathbb{M}_{2K-1}$$ 
 is the map defined by \eqref{eq:spectralmeasuremoments} and $h$ is given by
\begin{align*}
    h: (\RR \times [0,\infty))^{K-1} \times \RR\to (\RR \times [0,\infty))^{K-1} \times \RR, \quad x \mapsto (x_1,\sqrt{x_2},x_3,\sqrt{x_4},...,x_{2K-1})^\top.
\end{align*}
Clearly, the partial derivatives of the $k$-th entry, $1\leq k \leq 2K-1$, of $\varphi_K(r_{[2K-1]})$ exist, since it is a composition of the polynomial $g_k$ and the differentiable function $h_k$.
The Jacobian matrix of the map $\varphi_K$ evaluated at the Jacobi coefficients of the semicircle law was calculated in \cite{dette2012distributions} 
as
\begin{align}
\label{eq:HadamardDerivative}
    \frac{\partial \varphi_K}{\partial r_{[2K-1]}}(r_{[2K-1]}(\mu_{sc})) = D_{2K-1}
\end{align}
where $D_{2K-1}$ is the truncated version of \eqref{eq:D} given by
\begin{align}
\label{eq:Dk}
    (D_{2K-1})_{i,j} = 
        \binom{i}{\frac{i-j}{2}} - \binom{i}{\frac{i-j}{2}-1} 
\end{align}
if $i \geq j $ and $i+j$ even and $(D_{2K-1})_{i,j}=0$ otherwise. We use the convention $\binom{i}{-1}=0$ and note that $D_{2K-1}$ is nonsingular. Due to the Delta method proven in \cite{gao2011delta}, the sequence of $(2K-1)$-dimensional moment vectors
\begin{align*}
    \sqrt{\frac{n\beta'}{b_n}}\big(m_{[2K-1]}(\mu_n)-m_{[2K-1]}(\mu_{\text{sc}})\big)
\end{align*}
satisfies an LDP with speed $b_n$ and good rate function
\begin{align*}
    \mathcal{I}_5(m_{[2K-1]}) 
    = \inf\big\{\mathcal{I}_4(x) \mid D_{2K-1}x 
    =m_{[2K-1]}\big\} =\frac{1}{2}\big\Vert D_{2K-1}^{-1} m_{[2K-1]} - w_{[2K-1]} \big\Vert^2_2.
\end{align*}
\end{proof}

    \begin{proof}[Proof of Theorem \ref{thm:MainMDP}]
    Lemma \ref{MomentsMDP} yields the desired MDP for finite moment vectors.
     We begin by showing the integral representation of the rate function, that is
    \begin{align*}
        \frac{1}{2}\left\Vert D^{-1}_{2K-1}\big(m_{[2K-1]}-D_{2K-1}w_{[2K-1]}\big)\right\Vert_2^2
     = \frac{1}{2} \sum_{k=1}^{2K-1} \left( \int p_k(x) d(\mu_m - \mu_{Dw})(x)\right)^2.
    \end{align*}
    Here $\mu_m$ is a measure with first $2K-1$ moments given by $m_{[2K-1]}$, which exists (although it is not unique) by the solution to the signed moment problem in \cite{boas1939stieltjes}.
    We now repeat the proof given in \cite[p.15]{nagel2013moderatedeviationsspectralmeasures} based on an idea presented in \cite{chang1993normal}. 
    Define $X_i: x \mapsto x^i-m_{i}(\musc)$ for $i \leq 2K-1$ as a random variable on $(\RR,\mathcal{B}(\RR),\musc)$. Then
    \begin{align}
    \label{eq:MDPPaperJan}
        \EE[X_i]=0, \quad \text{Cov}(X_i,X_j) = m_{i+j}(\musc)-m_i(\musc)m_j(\musc) = (D_{2K-1}D_{2K-1}^\top)_{i,j}
    \end{align}
    where the last identity is due to \cite[Lemma 2.3]{nagel2013moderatedeviationsspectralmeasures}. Hence, the entries of $D_{2K-1}^{-1}X$ are uncorrelated with variance 1 which implies that the entries of $D_{2K-1}^{-1}X$ are the first $2K-1$ polynomials $(p_1(x),...,p_{2K-1}(x))$ orthonormal with respect to $\musc$. Thus, the entries in the $i$-th row of $D_{2K-1}$ are given by the coefficients of the $i$-th orthonormal polynomial except for the constant term and for any truncated vector of moments $m_{[2K-1]}(\nu)$ of a signed measure $\nu\in \mathcal M_0^f$ we have
\begin{align*}
    D_{2K-1}^{-1} &m_{[2K-1]}(\nu) \\
    &= \left(\int p_1(x)-p_1(0)\, d\nu(x), \int p_2(x)-p_2(0)\, d\nu(x),..., \int p_{2K-1}(x)-p_{2K-1}(0)\, d\nu(x)\right)^\top \\
    &= \left(\int p_1(x)\, d\nu(x), \int p_2(x)\, d\nu(x),..., \int p_{2K-1}(x)\, d\nu(x)\right)^\top.
\end{align*}
Hence, for any measure $\mu_{m}$ with first $2K-1$ moments given by $m_{[2K-1]}$,
\begin{align*}
     \mathcal{I}_5(m_{[2K-1]}) &= \frac{1}{2}\left\Vert D^{-1}_{[2K-1]}\big(m_{[2K-1]}-D_{2K-1}w_{[2K-1]}\big)\right\Vert_2^2
     = \frac{1}{2} \sum_{k=1}^{2K-1} \left( \int p_k(x) d(\mu_m - \mu_{Dw})(x)\right)^2. 
    \end{align*}
    Finally, the Dawson-Gärtner Theorem, see \cite[Theorem 4.6.1]{dembo2009large}, yields an LDP in $\RR^\NN$ with speed $b_n$ and good rate function
\begin{align*}
    \mathcal{I}(m)= \sup_{K \in \NN} \mathcal{I}_5(m_{[2K-1]}) = \frac{1}{2} \sum_{k=1}^\infty \left( \int p_k(x) d(\mu_m - \mu_{Dw})(x)\right)^2.
\end{align*}
It remains to show that 
\begin{align*}
    (Dw)_k = \begin{cases}
        \xi \, \binom{k}{\frac{k-3}{2}}  & i\text{ odd and } k \geq 3 \\
        \;0 & k \text{ even or } k=1
    \end{cases}
\end{align*}
and thus $(Dw)_k$ coincides with $m(\nu_\xi)$ given in \eqref{eq:strangemoments}. Indeed, for $k$ even $(Dw)_k=0$ holds.
For $k$ odd and $k \geq 3$, we have a telescoping sum
    \begin{align*}
        (Dw)_k = \sum_{l=1}^\infty D_{kl}w_l = \sum_{l=3,\, l \text{ odd}}^k \left(\binom{k}{\frac{k-l}{2}} - \binom{k}{\frac{k-l}{2}-1}\right) \xi
        = \binom{k}{\frac{k-3}{2}} \, \xi.
    \end{align*}
    \end{proof}

\subsection{On the alternative rescaling of Section \ref{sec:AltScaling}}
\label{sec:AltScalingProof}
Let us now remark on the necessary modifications to the proofs of the previous section in order to obtain an MDP result for the alternative scaling in Section \ref{sec:AltScaling}.

    The diagonal entries of the alternatively rescaled $n$-dimensional Jacobi matrix can be rewritten as 
    \begin{align*}
        \hat{d}_k:= \frac{1}{\sqrt{2\gamma_nn\beta  }}(z_{2k-1} -2\gamma_n) + \frac{1}{\sqrt{2\gamma_nn\beta  }}(z_{2k-2}-n\beta)
    \end{align*}
    for $2 \leq k \leq n$ and 
    \begin{align*}
        \hat{d}_1:= \frac{1}{\sqrt{2\gamma_nn\beta  }}(z_{2k-1} -2\gamma_n-n\beta).
    \end{align*}
    The scaling of the off-diagonal entries is the same as before.
    For $k\geq2$, Lemma \ref{MDPLemma} has to be slightly changed such that it may be applied to 
        \begin{align*}
            \sqrt{\frac{n\beta' }{b_n}} \left(\frac{1}{\sqrt{2\gamma_nn\beta  }}(z_{2k-1} -2\gamma_n),\frac{1}{2\gamma_n}z_{2k-1}-1,\frac{1}{\sqrt{2\gamma_n n\beta  }} z_{2k}-n\beta,\frac{1}{n\beta } z_{2k} -1\right).
        \end{align*}
        The only difference to \eqref{eq:GOriginalMDP} is the third entry. A calculation analogous to \eqref{eq:TaylorMDPxi} results in
        \begin{align*}
           \hat{\mathcal{C}}(t) = \frac{1}{2} (t_1^2+t_4^2), \quad t \in \RR^4
        \end{align*}
        instead of the $\mathcal{C}(t)$ given in \eqref{eq:CtMDP}. Thus, we obtain the same good rate function as in Lemma \ref{MDPLemma} except that its infinite unless $x_2 = x_3=0$.
        For $k=1$, the argumentation in Lemma \ref{MDPLemma} has to be modified such that it may be applied to 
        \begin{align} \label{eq:modfiedz1}
            \sqrt{\frac{n\beta' }{b_n}} \left(\frac{1}{\sqrt{2\gamma_nn\beta  }}(z_{1} -2\gamma_n-n\beta),\frac{1}{2\gamma_n}z_{1}-1,\frac{1}{\sqrt{2\gamma_n n\beta  }} z_{2}-n\beta,\frac{1}{n\beta } z_{2} -1\right)
        \end{align}
        which leads to 
        \begin{align*}
           \hat{\mathcal{C}}(t) = \frac{1}{2} ((t_1-\xi)^2+t_4^2) -\frac{1}{2} \xi^2, \quad t \in \RR^4
        \end{align*}
        and thus to a good rate function for \eqref{eq:modfiedz1}, which is infinite in $x \in \RR^4$ unless $x_2=x_3 = 0$ and in that case is given by
        \begin{align*}
        \mathcal I_0(x) = \frac{1}{2} ((x_1+\xi)^2+x_4^2) .
        \end{align*}
		We then proceed similarly to the proof of Lemma \ref{MDPLemma} and obtain essentially the same result but with $\hat{w}=(-\xi,0,0,0,...)$ replacing $w = (0,0,\xi,0,\xi,0,...)$. The proof of the integral representation of the rate function is analogous to the proof of Theorem \ref{thm:MainMDP}, but $D \hat{w}$ differs in the odd entries from $Dw = m(\nu_\xi)$. For $k\geq 3$ odd
        \begin{align}
            (D\hat{w})_k = \sum_{l=1}^\infty D_{kl}\hat{w}_l = -\xi D_{k,1} = \xi \left( \binom{k}{\frac{k-3}{2}}-\binom{k}{\frac{k-1}{2}}\right).
        \end{align}
       A helpful reference for finding compactly supported measures with binomial moments is \cite{Penson2014}. In our case, we obtain the moments of $\hat{\nu}_\xi$, given in  \eqref{eq:strangemomentsdensity2}, since
      \begin{align*}
          m_k(\hat{\nu}_\xi) = \frac{\xi}{2}\big(m_{k+3}(\muas) - 4m_{k+1}(\muas)\big),
      \end{align*}
      where $\muas$ denotes the arcsine distribution on $[-2,2]$, see the analogous argumentation for $\nu_\xi$ in \eqref{eq:densityStrangemoments}. A short computation then yields $m(\hat{\nu}_{\xi}) = D \hat{w}$.

\subsection{Proof of Theorem \ref{thm:CltPolynoms} (CLT)}
The CLT is proven for the entries of the Jacobi matrix first and subsequently transferred to a finite moment vector. 

The set of all moment sequences of $\nu \in \mathcal{M}_0^f$ is given by $\RR^\NN$. Accordingly, the set of truncated moment sequences $m_{[K]}(\mu) = (m_1(\mu),m_2(\mu),...,m_K(\mu))$ is given by $\RR^K$. Both sets may be equipped with the product topology which is metrizable.
\begin{lem}
\label{weakconvmom}
    Let $(\gamma_n)_n$ be a real sequence that satisfies
\begin{align*}
    \lim_{n \to \infty}\frac{\gamma_n}{n} = \infty
    \quad \text{and} \quad 
    \lim_{n \to \infty}\frac{n\beta'}{\sqrt{\gamma_n}} = \zeta \in [0,\infty) .
\end{align*} 
 and let $\mu_n$ denote the spectral measure of the Laguerre Ensemble with rescaled eigenvalues as in \eqref{eq:spectralmeasure}.     
Then the sequence of $(2K-1)$-dimensional random moment vectors $\big(m_{[2K-1]}(\mu_n)\big)_n \subset \RR^{2K-1}$ with $K \in \NN$ satisfies
    \begin{align*}
        \sqrt{n\beta'}\big( m_{[2K-1]}(\mu_n)-m_{[2K-1]}(\musc)\big) \xrightarrow[n \to \infty]{d} \mathcal{N}(D_{2K-1}w_{[2K-1]},D_{2K-1}D_{2K-1}^\top).
    \end{align*}
     Here $D_{2K-1}$ is the $(2K-1) \times (2K-1)$-dimensional triangular matrix given in \eqref{eq:Dk} and $w_{[2K-1]} = (0,0,\zeta,0,\zeta,0,...\zeta)^\top$.
\end{lem}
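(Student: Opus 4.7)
The plan is to follow the same route as the MDP proof in Lemma \ref{MomentsMDP}, but replacing the Gärtner--Ellis / MDP arguments with the classical CLT for $\chi^2$ random variables, and the Delta method for MDPs with the classical Delta method. The outcome will be a joint CLT for the rescaled Jacobi coefficients of $\tilde{\mathcal L}_n(\beta,\gamma_n)$, which is then pushed forward to the moment vector through the smooth map $\varphi_K$ defined in \eqref{AbbMomKoeff}, whose derivative at the semicircle coefficients equals $D_{2K-1}$ by \eqref{eq:HadamardDerivative}.

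First, I would analyse each entry of the rescaled Jacobi matrix separately. Setting $A_n^{(k)} = z_{2k-1}/(2\gamma_n)$ and $B_n^{(k)} = z_{2k}/(n\beta)$, one has
\begin{align*}
\sqrt{n\beta'}\,\tilde d_k = \frac{z_{2k-1}-2\gamma_n}{\sqrt{4\gamma_n}} + \frac{z_{2k-2}}{\sqrt{4\gamma_n}}, \qquad \sqrt{n\beta'}(\tilde c_k^2 - 1) = \sqrt{n\beta'}\bigl(A_n^{(k)} B_n^{(k)} - 1\bigr).
\end{align*}
Using $\EE[z_{2k-1}] = 2\gamma_n - \beta'(2k-2)$, $\text{Var}(z_{2k-1}) = 4\gamma_n - 2\beta'(2k-2)$ and the analogous formulas for $z_{2k}$, the classical CLT for centred $\chi^2$-variables gives $(z_{2k-1}-2\gamma_n)/\sqrt{4\gamma_n} \xrightarrow[n\to\infty]{d} \mathcal{N}(0,1)$ (the residual mean $-(k-1)\beta'/\sqrt{\gamma_n}$ vanishes since $\beta'$ is fixed and $\gamma_n\to\infty$) and $\sqrt{n\beta'}(B_n^{(k)} - 1) \xrightarrow[n\to\infty]{d} \mathcal{N}(0,1)$. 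Routine mean--variance computations under $n\beta'/\sqrt{\gamma_n} \to \zeta$ show that $z_{2k-2}/\sqrt{4\gamma_n}\to \zeta$ in probability for $k\geq 2$ (and is $0$ when $k=1$), while $\sqrt{n\beta'}(A_n^{(k)} - 1)\to 0$ in probability.

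Second, I would assemble these into a joint CLT. By Slutsky and the independence of $z_1,\dots,z_{2n-1}$, together with the asymptotic negligibility of the $z_{2k-1}$-contribution to $\tilde c_k^2 - 1$, which decouples the diagonal and off-diagonal coordinates in the limit, one obtains
\begin{align*}
\sqrt{n\beta'}\bigl(\tilde d_1, \tilde c_1^2 - 1, \tilde d_2, \tilde c_2^2 - 1, \ldots, \tilde d_K\bigr) \xrightarrow[n\to\infty]{d} \mathcal{N}\bigl(w_{[2K-1]},\, I_{2K-1}\bigr),
\end{align*}
with $w_{[2K-1]} = (0,0,\zeta,0,\zeta,\dots,\zeta)^\top$ and $I_{2K-1}$ the identity on $\RR^{2K-1}$. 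Applying the classical Delta method to $\varphi_K$ at the Jacobi coefficients of $\musc$, whose Jacobian is $D_{2K-1}$ by \eqref{eq:HadamardDerivative}, then yields
\begin{align*}
\sqrt{n\beta'}\bigl(m_{[2K-1]}(\mu_n) - m_{[2K-1]}(\musc)\bigr) \xrightarrow[n\to\infty]{d} \mathcal{N}\bigl(D_{2K-1} w_{[2K-1]},\, D_{2K-1} D_{2K-1}^\top\bigr),
\end{align*}
which is the desired statement.

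The main obstacle will be the careful bookkeeping of asymptotic orders: one must verify that the mean shift $\zeta$ arises exactly from the $z_{2k-2}/\sqrt{4\gamma_n}$ term for $k\geq 2$ (requiring the sharp scaling $n\beta'/\sqrt{\gamma_n}\to\zeta$), while the analogous shifts from $z_{2k-1}$ in the diagonal and from $A_n^{(k)} - 1$ in the off-diagonal both vanish. The surviving $\mathcal{N}(0,1)$-randomness then originates solely from $(z_{2k-1}-2\gamma_n)/\sqrt{4\gamma_n}$ and $\sqrt{n\beta'}(B_n^{(k)} - 1)$, which are mutually independent across all indices $k$.
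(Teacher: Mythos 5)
Your proposal is correct and follows essentially the same route as the paper: a classical CLT for the rescaled $\chi^2$ building blocks (with the mean shift $\zeta$ arising from the $z_{2k-2}/\sqrt{4\gamma_n}$ term and the $z_{2k-1}$-contribution to $\tilde c_k^2$ being negligible), followed by the classical Delta method applied to $\varphi_K$ with Jacobian $D_{2K-1}$. The only difference is organizational: the paper identifies the limit of the $4$-vector $\mathcal G^{(n)}$ via characteristic functions and then pushes it through the maps $\Phi$ and $\Psi$, whereas you decompose each Jacobi entry directly and invoke Slutsky, which amounts to the same computation.
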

    \begin{proof}
Let
\begin{align*}
    \mathcal{G}^{(n)} = \sqrt{n \beta'} \left(\frac{1}{\sqrt{2\gamma_n n\beta}}(z_{2k-1} -2\gamma_n),\frac{1}{2\gamma_n}z_{2k-1}-1,\frac{1}{\sqrt{2\gamma_n n\beta}} z_{2k},\frac{1}{n\beta} z_{2k} -1\right).
\end{align*}
        With the same arguments as in the proof of Lemma \ref{MDPLemma}, it can be proven that the characteristic function of $\mathcal{G}^{(n)}$ converges, such that for $t\in\RR^4$
        \begin{align*}
            \EE \Big[\exp \big(i \langle t, \mathcal{G}^{(n)} \rangle\big) \Big] \xrightarrow[n \to \infty]{} \exp\Big(-\frac{1}{2}(t_1^2+t_4^2)+\zeta it_3\Big).
        \end{align*}
        Thus $\mathcal{G}^{(n)} \xrightarrow[n \to \infty]{d} \mathcal{N}(\mu_1,\Sigma_1) $ with $\mu_1 = (0,0,\zeta,0)^\top$ and
        \begin{align*}
            \begin{matrix}
                \Sigma_1 = \left( \begin{matrix}
                    1 & 0 & 0 & 0 \\
                    0 & 0 & 0 & 0 \\
                    0 & 0 & 0 & 0 \\
                    0 & 0 & 0 & 1 
                \end{matrix}\right).
            \end{matrix}
        \end{align*}
        Next, we apply the continous mapping theorem and the classical Delta method with the same functions which were used in the proof of Theorem \ref{thm:MainMDP} for the contraction principle and the Delta method for MDPs, respectively. First, the classical Delta method with the function in (\ref{ErsteFktDelta}) yields
        \begin{align*}
             \sqrt{n\beta'} \left( \Big(\frac{1}{\sqrt{2\gamma_n n\beta}}(z_{2k-1} -2\gamma_n),\frac{1}{2\gamma_n n \beta}z_{2k-1}z_{2k},\frac{1}{\sqrt{2\gamma_n n\beta}} z_{2k} \Big) -(0,1,0)\right) \xrightarrow[n \to \infty]{d} \mathcal{N}(0,\Sigma_2)
        \end{align*}
        with $\mu_2 = (0,0,\zeta)^\top$ and
        \begin{align*}
            \begin{matrix}
                \Sigma_2 = \left( \begin{matrix}
                    1 & 0 & 0  \\
                    0 & 1 & 0 \\
                    0 & 0 & 0 
                \end{matrix}\right).
            \end{matrix}
        \end{align*}
        Because $z_1,z_2,...,z_{2K}$ are independent, we obtain
        \begin{align*}
             \sqrt{n\beta'}  \left( \Big(
            \frac{z_{1} -2\gamma_n}{\sqrt{2\gamma_n n\beta}},
            \frac{z_{1}z_{2}}{2\gamma_n n \beta},
            \frac{z_{2}}{\sqrt{2\gamma_n n\beta}},
            ...,
            \frac{z_{2K} }{\sqrt{2\gamma_n n\beta}}\Big) 
            -(0,1,0,...,0,1,0)\right)
            \xrightarrow[n \to \infty]{d} \mathcal{N}(\mu_3,\Sigma_3)
        \end{align*}
        where $\Sigma_3$ is the $3K\times3K$ diagonal matrix with diagonal entries $(1,1,0,...,1,1,0)$ and
        \begin{align*}
            \mu_3 = (0,0,\zeta,...,0,0,\zeta)^\top.
        \end{align*}
        Then, the continous mapping theorem with the function in (\ref{zweiteFktContraction}) yields
        \begin{align*}
             \sqrt{n\beta'}  &\left( \Big(
            \frac{z_{1} -2\gamma_n}{\sqrt{2\gamma_n n\beta}},
            \frac{z_{1}z_{2}}{2\gamma_n n \beta},
            \frac{z_{2}+z_3-2\gamma_n}{\sqrt{2\gamma_n n\beta}},
            ...,
            \frac{z_{2K-2}+z_{2K-1} -2\gamma_n}{\sqrt{2\gamma_n n\beta}}
            \Big) 
            -(0,1,0,1,...,0)\right) \\[2mm]
            &\xrightarrow[n \to \infty]{d} \mathcal{N}(\mu_4,I_{2K-1})
        \end{align*}
        where $I_{2K-1}$ is the $(2K-1)\times(2K-1)$ identity matrix and
        \begin{align*}
            \mu_4 = w_{[2K-1]} = (0,0,\zeta,0,\zeta,0,\zeta,...,\zeta)^\top.
        \end{align*}
        Finally, the classical Delta method with the function in (\ref{AbbMomKoeff}) and the derivative given in (\ref{eq:HadamardDerivative}) yield the desired result.
    \end{proof}
\begin{proof}[Proof of Theorem \ref{thm:CltPolynoms}]
    Let $p$ be a polynomial of the form $p(x) = \sum_{i=0}^d \alpha_i x^i$ with $\alpha_i \in \RR$. Then for $K \in \NN$ with $d \leq 2K-1$
    \begin{align*}
        \int p \,d\mu = \sum_{i=1}^d \alpha_i m_{i}(\mu) = \sum_{i=1}^{2K-1} \alpha_i m_{i}(\mu)  \quad \text{for } \mu \in \mathcal{M}_1^c,
    \end{align*}
    where $\alpha_i = 0$ for $d<i\leq 2K-1$.
    Thus, we obtain from Lemma \ref{weakconvmom} the convergence
    \begin{align*}
         \sqrt{n\beta'}\left(\int p \, d\mu_n - \int p \, d\musc \right) \xrightarrow[n \to \infty]{d}
         \mathcal{N}(\alpha^\top D_{2K-1}w_{[2K-1]},\alpha^\top D_{2K-1}D_{2K-1}^\top \alpha)
    \end{align*}
    with $\alpha = (\alpha_1,...,\alpha_{2K-1})^\top$. Using the idea from \cite{chang1993normal}, which already appeared in the proof of Theorem \ref{thm:MainMDP}, define $X_i: x \mapsto x^i-m_{i}(\musc)$ for $i \in \NN$ as a random variable on $(\RR,\mathcal{B}(\RR),\musc)$. Then, as in \eqref{eq:MDPPaperJan},
    \begin{align*}
        \EE[X_i]=0, \quad \text{Cov}(X_i,X_j) = m_{i+j}(\musc)-m_i(\musc)m_j(\musc) = (D_{2K-1}D_{2K-1}^\top)_{i,j}.
    \end{align*}
    Therefore, the variance has the form
    \begin{align*}
        \alpha^\top\big(D_{2K-1}D_{2K-1}^\top\big)\alpha &= \text{Cov}\big(\alpha^\top X,\alpha^\top X\big) = \text{Var}\left(\sum_{i=1}^{2K-1}\alpha_i\big(x^i-m_{i}(\musc)\big)\right) \\
        &= \int\left(p-\int p \,d\musc\right)^2 \,d\musc.
    \end{align*}
    The form of the expected value follows from \eqref{eq:strangemoments}.
\end{proof}

\medskip

\textbf{Acknowledgments:} 
We thank Johannes Heiny for pointing us towards references \cite{enriquez2016spectra,noiry2018spectral}.  
This work was funded by the Deutsche Forschungsgemeinschaft (DFG, German Research Foundation) - Projektnummer 499508288

\bibliographystyle{alpha}
\bibliography{Literatur}

\bigskip

{\footnotesize
\noindent
TU Dortmund, \\
Fakult\"at f\"ur Mathematik, \\
Vogelpothsweg 87, 
44227 Dortmund, 
Germany, \\
{\tt helene.goetz@tu-dortmund.de}\\
{\tt jan.nagel@tu-dortmund.de }
}

\end{document}